\def\eqref#1{\textcolor{blue}{(\ref{#1})}}
\journal{Journal of Number Theory}
\newcommand\Perms[2]{\tensor[^{#1}]P{_{#2}}}
\newtheorem{theorem}{Theorem}[section]
\newtheorem{corollary}[theorem]{Corollary}
\newtheorem{definition}[theorem]{Definition}
\newtheorem{lemma}[theorem]{Lemma}
\newtheorem{proposition}[theorem]{Proposition}
\begin{document}

\begin{frontmatter}

\title{Shifted Euler constants and a generalization of Euler-Stieltjes constants}

\author{Tapas Chatterjee\fnref{myfootnote}}
\corref{mycorrespondingauthor}
\cortext[mycorrespondingauthor]{Corresponding author}
\fntext[myfootnote]{Research of the first author was supported by a NBHM Research Project Grant-in-Aid with grant no. NBHM/2/11/39/2017/R$\&$D II/3481.}
\ead{tapasc@iitrpr.ac.in}
\author{Suraj Singh Khurana\fnref{myfootnote2}}
\address{Department of Mathematics, Indian Institute of Technology Ropar, Punjab-14001, India}
\fntext[myfootnote2]{Research of the second author was supported by Council of Scientific and Industrial Research (CSIR), India under File No: 09/1005(0016)/2016-EMR-1.}
\ead{suraj.khurana@iitrpr.ac.in}

\begin{abstract}
The purpose of this article is twofold. First, we introduce the  constants $\zeta_k(\alpha,r,q)$ where $\alpha \in (0,1)$ and study them along the lines of work done on Euler constant in arithmetic progression $\gamma(r,q)$ by  Briggs, Dilcher, Knopfmacher, Lehmer and some other authors. These constants are used for evaluation of certain integrals involving error term for Dirichlet divisor problem with congruence conditions and also to provide a closed form expression for the value of a class of Dirichlet L-series at any real critical point. In the second half of this paper, we consider the behaviour of the Laurent Stieltjes constants $\gamma_k(\chi)$ for a principal character $\chi.$ In particular we study a generalization of the ``Generalized Euler constants'' introduced by Diamond and Ford in 2008. We conclude with a short proof for a closed form expression for the first generalized Stieltjes constant $\gamma_1(r/q)$ which was  given by Blagouchine in 2015.
\end{abstract}

\begin{keyword}
Analytic continuation\sep Dirichlet L-series\sep Divisor problem\sep Generalized Euler constants\sep  Riemann Zeta function
\MSC[2010] 11M06\sep 11M99\sep 11Y60\sep 11M35\sep 11K65
\end{keyword}
\end{frontmatter}

\section{Introduction}
It is well known that the Euler's constant $\gamma$ occurs as the constant term in the Laurent series expansion of the Riemann zeta function $\zeta(s)$ at $s=1$. In particular we have
\begin{equation} \label{LSC}
\zeta(s)=\dfrac{1}{s-1}+\sum\limits_{k=0}^{\infty}\dfrac{(-1)^{k}}{k!}\gamma_{k}(s-1)^k
\end{equation} 
where $\gamma_0$ is the Euler's constant $\gamma$ and in general for $k \geq 0$ the constant $\gamma_k$ is known as Euler-Stieltjes constant which is given by the limit
\begin{equation} \label{SRG}
\gamma_{k} := \lim_{N \rightarrow \infty}\left( \sum\limits_{n=1}^{N}\dfrac{\log^k n}{n}-\dfrac{\log^{k+1} N}{k+1}\right).
\end{equation} 
Not only this, the constant $\gamma$ has made its appearance in numerous other works \cite{Lag}  and is therefore considered as fundamental as $\pi$ and $e.$ But  unlike $\pi$ and $e$, the question of irrationality of $\gamma$ is still open. Consequently, many authors have  considered various generalizations of $\gamma$ and studied their properties leading to a vast literature. In 
\hyperref[SECS]{Section \ref{SECS}}, we briefly review a few generalizations of $\gamma$ important for our discussion  and then state the definition of the ``Shifted Euler constants" $\zeta_k(\alpha,r,q)$ along with the related results. The proofs of these results and some properties of $\zeta_k(\alpha,r,q)$ are given in the \hyperref[POTR]{Section \ref{POTR}}. In \hyperref[GOEC]{Section \ref{GOEC}}, we consider a generalization of ``Generalized Euler constants" introduced by Diamond and Ford in \cite{DHF} in the context of Dirichlet L-series. At the end we give a closed form expression for the first Generalized Stieltjes constant which occurs in the Laurent series expansion of Hurwitz zeta function about the point $s=1.$
\subsection{Notations}
To facilitate our discussion we provide list of the abbreviated notations which will be used throughout this paper. Empty set will be denoted by the symbol $\emptyset$. The value of an empty sum and an empty product will be considered as $0$ and $1$ respectively. The symbol $\mathbb{N}_0$ stands for the set of all non-negative integers. For a complex number $z$, let $\Re(z)$ denote the real part of $z$. The symbol $\Perms{n}{i}$ will denote the value $\dfrac{n!}{(n-i)!}$. The residue of the function $f(z)$ at the point $z=a$ will be written as $\text{Res}_{z=a}f(z)$. The symbol $D_{\alpha}$ will be used to denote the region $\{s \in \mathbb{C} \mid  |s - \alpha | < | \alpha-1 |  \}$. Notations $\zeta(s)$, $\psi(s)$ and $\zeta(s,x)$ denote respectively the Riemann zeta function, the Digamma function and the Hurwitz zeta function.  $F(x,s)$ represents the periodic zeta function and is defined by the series
\begin{equation*}  
 F \left(x,s\right)=\sum_{n=1}^{\infty}\frac{e^{2\pi inx}}{n^{s}}
\end{equation*} where $\Re(s)>1$ if $x \in \mathbb{Z}$ and $\Re(s)>0$ otherwise. For $r,q \in \mathbb{N}$ where $r \leq q$ the partial zeta function $\zeta(s;r,q)$ is defined as
\begin{equation*}
\zeta(s;r,q):=\sum_{\substack{n \geq 1 \\ n \equiv r \bmod q}}\dfrac{1}{n^s}
\end{equation*}
for $\Re(s)>1.$ For a function $f$, the notation $f^{(n)}$ will mean the $n$-th derivative of the function $f$. In particular, $f^{(0)}=f$. For any real number $x$, $\{x\}$ will represent the fractional part of $x$ and $\lfloor x \rfloor$ will denote the greatest integer less than or equal to $x$. Unless otherwise stated the symbol $p_i$ will denote the $i$-th prime. The symbol $\mathcal{P}_r$ denotes the set consisting of first $r$ many primes. $\bar{\mathbb{Q}}$ represents the field  consisting of all algebraic numbers. Let $\alpha \in (0,1)$, $k$ be a non negative integer, $r,q \in \mathbb{N}$ where $r \leq q$. Then the Euler-Stieltjes constant $\gamma_k$, the Euler-Lehmer constant $\gamma(r,q)$, the Generalized Euler-Lehmer constant $\gamma_k(r,q)$ and the shifted Euler-Lehmer constant $\zeta_k(\alpha,r,q)$ are given by \eqref{SRG}, \eqref{ELC}, \eqref{GELC} and Definition \ref{MND} respectively.  
\section{Shifted Euler constants} \label{SECS}
In 1961, Briggs \cite{BRI} considered the constants $\gamma(r,q)$ associated with arithmetic progressions defined as
\begin{equation} \label{ELC}
\gamma(r,q):=\lim_{x\rightarrow\infty}\left(\sum\limits_{\substack{0<n\leq x \\ n \equiv r\bmod q}} \dfrac{1}{n}-\frac{1}{q}\log x\right)
\end{equation}
where $1 \leq r \leq q.$
It is easy to see that $\gamma(1,1)=\gamma.$ In \cite{Le}, using discrete Fourier transforms and some basic tools Lehmer obtained many properties of the constants $\gamma(r,q)$  and derived  an elementary proof of the well known Gauss theorem on digamma function $\psi(z)$ at rational arguments. Further the connection of $\gamma(r,q)$ with the class numbers of quadratic fields $\mathbb{Q}(\sqrt{ \pm q})$ and certain infinite series was given in \cite{Le}. In particular \hyperlink{cite.Le}{\cite[Theorem 8]{Le}}, it was shown that for a $q$ periodic arithmetic function $f$ satisfying $\sum\limits_{n=1}^{q}f(n)=0$ the Dirichlet series $L(s,f)$ converges at $s=1$ and is given by the following closed form expression
\begin{equation*}
L(1,f)=\sum\limits_{r=1}^{q}f(r)\gamma(r,q).
\end{equation*}
The constants $\gamma(r,q)$ are referred to as Euler-Briggs-Lehmer constants or 
sometimes just Euler-Lehmer constants and results related to their arithmetic nature has been given by Murty and Saradha in \cite{EM}. For more results related to arithmetic nature of $\gamma(r,q)$ and its generalization see \bb[\citenum{GE},\citenum{GSS}]. For the work done on  p-adic version of the Euler-Lehmer constants $\gamma(r,q)$ see \cite{CTG} and \cite{DIJ}. 

A further generalization of $\gamma(r,q)$ was introduced by Knopfmacher in \cite{KNO} and later studied in detail by Dilcher in \cite{Di}. They considered the generalized Euler-Lehmer constants of higher order which are defined as 
\begin{equation} \label{GELC}
\gamma_k(r,q):=\lim_{x\rightarrow \infty}\left(\sum_{\substack{n\leq x \\ n \equiv r \bmod q}}\dfrac{\log^k n}{n}-\dfrac{\log^{k+1} x}{q(k+1)}\right).
\end{equation}
It is easy to see that $\gamma_0(1,1)=\gamma$ and $\gamma_0(r,q)=\gamma(r,q)$ and $\gamma_k(1,1)=\gamma_k.$ Most of the results given in \cite{Le} were generalized in \cite{KNO} using the properties of $\gamma_k(r,q).$ In Proposition 9 of \cite{KNO} it was shown that for a $q$ periodic arithmetic function $k$-th derivative of $L(s,f)$  at $s=1$ exists if and only if $\displaystyle\sum\limits_{r=1}^{q}f(r)=0$. Further in the case of existence, the value is given by the following closed form expression
 \begin{equation*}
 L^{(k)}(1,f)=(-1)^{k}\sum\limits_{r=1}^{q}f(r)\gamma_{k}(r,q).
 \end{equation*}
For some particular cases where the above identity was used to give explicit expressions  the reader may see  section 6 of \cite{KNO}.   
 
From the above discussion it seems natural to ask for similar results related to the arithmetic nature or closed form expressions for $L(s,f)$  at points other than $s=1$ as well. We investigate this question for real points lying in the critical strip $0<\Re(s)<1$ which is an important region to study for many L-functions. For this we consider a variant of generalized Euler-Lehmer constants and study its properties along the lines of contributions made by Briggs, Dilcher, Knopfmacher, Lehmer and some other authors.   
\begin{definition} \label{MND}
For $\alpha \in (0,1)$, $k \in \mathbb{N}_0$ and $r,q \in \mathbb{N}$ where $r \leq q$, the Shifted Euler constant $\zeta_k(\alpha,r,q)$ is defined as the limit
\begin{equation*} 
\zeta_k(\alpha,r,q):= \lim_{x \rightarrow \infty}\left( H_{k}(x,\alpha,r,q) - \dfrac{I_k(x,\alpha)}{q} \right)
\end{equation*}
where 
$$I_k(x, \alpha)=\sum\limits_{i=0}^{k}\left((-1)^i\dfrac{\Perms{k}{i}\log^{k-i}x}{(1-\alpha)^{i+1}}\right)x^{1-\alpha} $$
and
$$H_{k}(x,\alpha,r,q):= \sum\limits_{\substack{n\leq x \\ n \equiv r\bmod q}} \dfrac{\log^{k} n}{n^{\alpha}}.$$
Here $\Perms{n}{i}:=\dfrac{n!}{(n-i)!}$.
\end{definition}
For the existence of the limit $\zeta_{k}(\alpha,r,q)$ see \hyperref[MLET]{Proposition \ref{MLET}}. It is easy to see from the definition that $\zeta_0(\alpha,1,1)$ is equal to the value $\zeta(\alpha)$\hyperlink{cite.Ap}{\cite[p. 56]{Ap}}. More generally the constants $\zeta_k(\alpha,r,q)$ are related with the periodic zeta function $F(x,s)$ which is defined by the series
\begin{equation*}  
 F \left(x,s\right)=\sum_{n=1}^{\infty}\frac{e^{2\pi inx}}{n^{s}}
\end{equation*} where $\Re(s)>1$ if $x \in \mathbb{Z}$ and $\Re(s)>0$ otherwise. This function has an analytic continuation throughout whole complex plane except when $x \in \mathbb{Z}$ in which case it has a simple pole at $s=1$. The following theorem gives an expression for $\zeta_k(\alpha,r,q)$ in terms of derivatives of $F(x,s)$.
 \begin{theorem} \label{MST}
For $\alpha \in (0,1)$, $k \in \mathbb{N}_0$ and $b,q \in \mathbb{N}$ such that $b \leq q$ we have the following 
\begin{equation} \label{DPZ}
\zeta_k(\alpha,b,q)=\dfrac{H_k(\alpha)}{q}+\dfrac{(-1)^k}{q}\sum\limits_{a=1}^{q-1}F^{(k)} \left(\frac{a}{q},\alpha \right)e^{\frac{-2\pi iab}{q}}
\end{equation}
where 
\begin{equation*}
H_k(\alpha)= \int\limits_{1}^{\infty} \{t\}\left( \dfrac{k \log^{k-1} t-\alpha  \log^{k} t}{t^{1+\alpha}}  \right) dt 
+ \dfrac{(-1)^{k+1}k!}{(1-\alpha)^{k+1}}+ \left\lfloor \dfrac{1}{k+1} \right\rfloor
\end{equation*} and $F\left(x,s\right)$ is the periodic zeta function.
\end{theorem}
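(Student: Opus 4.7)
The natural starting point is to invert the congruence condition using orthogonality of additive characters,
\[
\mathbf{1}_{n\equiv b\,(\bmod q)}\;=\;\frac{1}{q}\sum_{a=0}^{q-1}e^{2\pi i a(n-b)/q},
\]
which gives
\[
H_k(x,\alpha,b,q)=\frac{1}{q}\sum_{n\le x}\frac{\log^k n}{n^\alpha}\;+\;\frac{1}{q}\sum_{a=1}^{q-1}e^{-2\pi iab/q}\sum_{n\le x}\frac{e^{2\pi ian/q}\log^k n}{n^\alpha}.
\]
The plan is to handle the $a=0$ term (together with the subtraction $I_k(x,\alpha)/q$) and the $a\neq 0$ terms separately, since their limiting behaviours are controlled by different mechanisms: elementary Euler--Maclaurin analysis for the former, and the analytic continuation of $F(x,s)$ for the latter.

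For each $a\in\{1,\dots,q-1\}$ one has $a/q\notin\mathbb{Z}$, so $F(a/q,s)$ is holomorphic on $\Re(s)>0$, and term-by-term $k$-fold differentiation of the defining series is justified by summation by parts; this identifies
\[
(-1)^kF^{(k)}\!\left(\tfrac{a}{q},\alpha\right)\;=\;\sum_{n=1}^{\infty}\frac{e^{2\pi ian/q}\log^k n}{n^\alpha},
\]
and shows that the corresponding finite sums converge as $x\to\infty$ to $(-1)^kF^{(k)}(a/q,\alpha)$. Multiplied by $e^{-2\pi iab/q}/q$ and summed over $a$, this already supplies the character-sum contribution in \eqref{DPZ}.

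For the principal $a=0$ part I would apply Abel summation to $f(t)=\log^k t/t^\alpha$, obtaining
\[
\sum_{n\le N}\frac{\log^k n}{n^\alpha}\;=\;f(1)+\int_1^N\frac{\log^k t}{t^\alpha}\,dt+\int_1^N\{t\}\,\frac{k\log^{k-1}t-\alpha\log^k t}{t^{1+\alpha}}\,dt,
\]
where $f(1)=\lfloor 1/(k+1)\rfloor$ (equal to $1$ when $k=0$ and to $0$ otherwise). Iterated integration by parts applied to $\int_1^N\log^k t/t^\alpha\,dt$ yields the identity
\[
\int_1^N\frac{\log^k t}{t^\alpha}\,dt\;=\;I_k(N,\alpha)\;-\;\frac{(-1)^kk!}{(1-\alpha)^{k+1}}.
\]
Substituting this, subtracting $I_k(N,\alpha)$, and letting $N\to\infty$, the remaining integrand is $O(\log^k t/t^{1+\alpha})$ and hence absolutely integrable on $[1,\infty)$, so the limit equals precisely $H_k(\alpha)$. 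Dividing by $q$ and combining with the $a\neq 0$ contribution yields the formula.

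The main technical obstacle is the bookkeeping in the iterated integration by parts that identifies $\int_1^N\log^k t/t^\alpha\,dt$ with $I_k(N,\alpha)$ up to the boundary constant $(-1)^kk!/(1-\alpha)^{k+1}$; a clean induction on $k$ should make this transparent, after which the floor term $\lfloor 1/(k+1)\rfloor$ and the stated shape of $H_k(\alpha)$ fall out immediately. Existence of the limit in Definition~\ref{MND}, referenced in the paper as Proposition~\ref{MLET}, is a by-product of the same calculation.
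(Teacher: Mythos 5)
Your proposal is correct and follows essentially the same route as the paper: the paper computes the discrete Fourier transform $f(a)=\sum_{b}\zeta_k(\alpha,b,q)e^{2\pi iab/q}$ (evaluating $f(0)=H_k(\alpha)$ via its Lemma~\ref{Main}, which is exactly your Abel-summation computation, and $f(a)=(-1)^kF^{(k)}(a/q,\alpha)$ for $a\neq 0$) and then applies Fourier inversion, which is your expansion of the congruence indicator read in the opposite direction. The two arguments are the same decomposition into the $a=0$ frequency and the nontrivial additive characters, so there is nothing substantive to add.
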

In the special case when $k=0$, using analytic continuation of $\zeta(s)$ we obtain a series representation for $\zeta_0(\alpha,b,q).$
\begin{corollary} \label{MSC}
 For $\alpha \neq 1$ and $\Re (\alpha) >0$ we have the following 
 \begin{equation*}
 \zeta_0(\alpha,b,q)=\dfrac{\alpha}{q(\alpha-1)}+\dfrac{1}{q(1-\alpha)}\sum\limits_{t \geq 1}  \sum\limits_{n=2}^{\infty} \dfrac{(\alpha-1)_t}{n^{\alpha+t}} +\dfrac{1}{q}\sum\limits_{a=1}^{q-1}\sum\limits_{n=1}^{\infty}\dfrac{e^{\frac{2\pi i a(n-b)}{q}}}{n^{\alpha}}
 \end{equation*}
 where $(s)_t=\dfrac{s(s+1)\cdots(s+t)}{(t+1)!}$ is the classical Pochhammer symbol.
\end{corollary}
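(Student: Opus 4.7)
My plan is to specialize Theorem \ref{MST} to $k=0$, reduce the resulting $H_0(\alpha)$ to $\zeta(\alpha)$, and then derive the required series representation for $\zeta(\alpha)$ via a binomial/telescoping argument, extended by analytic continuation.

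First, I would set $k=0$ in Theorem \ref{MST}. Using $\lfloor 1/(k+1)\rfloor = 1$, $(-1)^{k+1}k!/(1-\alpha)^{k+1} = 1/(\alpha-1)$, and $k\log^{k-1}t - \alpha\log^k t = -\alpha$, the formula for $H_0(\alpha)$ reduces to
\begin{equation*}
H_0(\alpha) = -\alpha\int_1^\infty \frac{\{t\}}{t^{1+\alpha}}\,dt + \frac{1}{\alpha-1} + 1.
\end{equation*}
The standard Euler--Maclaurin identity $\zeta(s) = \frac{s}{s-1} - s\int_1^\infty \{t\}t^{-s-1}\,dt$, valid for $\Re(s)>0$, $s\neq 1$, then collapses this to $H_0(\alpha) = \zeta(\alpha)$. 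Combined with the observation that $F(a/q,\alpha)e^{-2\pi iab/q} = \sum_{n\geq 1} e^{2\pi ia(n-b)/q}/n^\alpha$, equation \eqref{DPZ} becomes
\begin{equation*}
\zeta_0(\alpha,b,q) = \frac{\zeta(\alpha)}{q} + \frac{1}{q}\sum_{a=1}^{q-1}\sum_{n=1}^\infty \frac{e^{2\pi ia(n-b)/q}}{n^\alpha}.
\end{equation*}

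It remains to expand $\zeta(\alpha)$ in the prescribed form. I would start in the region $\Re(\alpha)>1$, where for $n\geq 2$ the binomial expansion $(n-1)^{1-\alpha} = n^{1-\alpha}(1-1/n)^{1-\alpha} = n^{1-\alpha}\sum_{t\geq 0}\binom{1-\alpha}{t}(-1/n)^t$ gives
\begin{equation*}
n^{-\alpha} = \frac{n^{1-\alpha}-(n-1)^{1-\alpha}}{1-\alpha} + \sum_{t\geq 1}\frac{(-1)^{t+1}\binom{1-\alpha}{t+1}}{1-\alpha}\,n^{-\alpha-t}
\end{equation*}
after isolating the $t=1$ term (which equals $n^{-\alpha}$) and reindexing. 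A direct algebraic check shows $(-1)^{t+1}\binom{1-\alpha}{t+1} = (\alpha-1)_t$ with the paper's Pochhammer convention. Summing over $n\geq 2$ telescopes the first piece to $1/(\alpha-1)$ (valid for $\Re(\alpha)>1$), yielding
\begin{equation*}
\zeta(\alpha) - 1 = \frac{1}{\alpha-1} + \frac{1}{1-\alpha}\sum_{t\geq 1}\sum_{n=2}^\infty \frac{(\alpha-1)_t}{n^{\alpha+t}},
\end{equation*}
which rearranges to the stated identity for $\zeta(\alpha)$ after adding $1$ to produce $\alpha/(\alpha-1)$.

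Finally, I would extend the identity to the full region $\Re(\alpha)>0$, $\alpha\neq 1$, by analytic continuation. The main thing to check is that the double series on the right defines an analytic function there: for $t\geq 1$ the inner sum $\zeta(\alpha+t)-1 = \sum_{n\geq 2} n^{-\alpha-t}$ is bounded by $2^{1-\Re(\alpha)-t}$, and standard Gamma-function estimates give $(\alpha-1)_t = O(t^{\Re(\alpha)-2})$, so the outer series converges uniformly on compacta. Interchanging the two sums is then justified by absolute convergence, and since both sides are holomorphic on $\{\Re(\alpha)>0\}\setminus\{1\}$ and agree on $\Re(\alpha)>1$, they agree throughout. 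Substituting into the expression for $\zeta_0(\alpha,b,q)$ above yields the corollary.

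The main obstacle is the justification of the analytic continuation: one has to confirm the convergence/interchange carefully in the critical strip, since the telescoping that produced the $1/(\alpha-1)$ term in the first place only works in the region of absolute convergence. Once that is in place, everything reduces to bookkeeping with Pochhammer symbols.
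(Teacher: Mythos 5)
Your proposal is correct and follows essentially the same route as the paper: specialize Theorem \ref{MST} to $k=0$, identify $H_0(\alpha)=\zeta(\alpha)$, unfold $F(a/q,\alpha)e^{-2\pi iab/q}$, and expand $\zeta(\alpha)$ via the series $\zeta(s)=\frac{s}{s-1}-\frac{1}{s-1}\sum_{t\geq 1}(s-1)_t(\zeta(s+t)-1)$. The only difference is that the paper simply cites this last identity (due to Ramanujan) whereas you re-derive it by binomial expansion and telescoping and justify the continuation to $\Re(\alpha)>0$ explicitly; your version is self-contained but not a different method.
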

To motivate for the next result we recall that the Laurent series coefficients  $\gamma_k$ of the function $\zeta(s)$ around the point $s=1$ is well known and has been studied extensively by many authors \bb[\citenum{ADT},\citenum{WB},\citenum{BC},\citenum{MWC},\citenum{MC},\citenum{MCK},\citenum{LJT},\citenum{ZNW}]. On the contrary, the Laurent series expansion of certain Dirichlet L-functions at points other than the poles seems to be first considered by A. Ivi\'c \cite{ALI} and was used to evaluate integrals containing error terms related to some well known problems \hyperlink{cite.ALI}{\cite[Section 4]{ALI}}. In \cite{DMC}, using the Maclaurin series expansion of $\zeta(s)$ at $s=0$, Lehmer gave an expression for the infinite sum $\sum\limits_{\rho}\frac{1}{\rho^k}$ where $\rho$ varies over all the complex zeroes of $\zeta(s).$ For Maclaurin series expansion of Hurwitz zeta function at $s=0$ see \cite{BKH}. Here we give explicit expressions for the Laurent series expansion of some well known Dirichlet L-functions at points lying on the real line in the critical strip. For this, we make use of the partial zeta function $\zeta(s;r,q)$ which is defined as 
\begin{equation*}
\zeta(s;r,q):=\sum_{\substack{n \geq 1 \\ n \equiv r \bmod q}}\dfrac{1}{n^s}
\end{equation*}
for $\Re(s)>1.$  Using the well known meromorphic continuation of Hurwitz zeta function $\zeta(s,\alpha)$ \hyperlink{cite.Ap}{\cite[Chapter 12]{Ap}} and the identity 
\begin{equation} \label{HTS}
\zeta(s;r,q)=q^{-s}\zeta\left(s,\dfrac{r}{q}\right)
\end{equation}
one can easily deduce the meromorphic continuation of $\zeta(s;r,q)$ over the whole complex plane except at the point $s=1.$ Recently Shirasaka gave \bb[\citenum{SHI}, Theorem(i)] the Laurent series expansion of the function $\zeta(s;r,q)$ around the point $s=1$ as follows:
\begin{equation} \label{LSH}
\zeta(s;r,q)=\dfrac{1}{q(s-1)}+\sum\limits_{k=0}^{\infty}\dfrac{(-1)^{k}}{k!}\gamma_k(r,q)(s-1)^k.
\end{equation}
It is easy to see that the expansion given by the equation \eqref{LSH} is a generalization of the expansion given in  \eqref{LSC}. Using  \eqref{LSH} and some properties of Hurwitz zeta function Shirasaka derived identities of Lehmer \cite{Le}, Dilcher \cite{Di} and Kanemitsu \cite{KAN} in a unified manner. 
We recall that for $\alpha \in (0,1)$ the notation $D_{\alpha}$ represents the region 
\begin{equation*}
D_\alpha:=\{s \in \mathbb{C} \mid |s - \alpha | < | \alpha-1 |  \}.
\end{equation*}
\begin{theorem} \label{TSF} 
For  $\alpha \in (0,1)$ and $r,q \in \mathbb{N}$ the Taylor series expansion of the function $\zeta(s;r,q)$ at the point $s=\alpha$ is given by 
\begin{equation} \label{TSP}
\zeta(s;r,q)=\sum\limits_{m=0}^{\infty}\dfrac{(-1)^m}{m!}\zeta_m(\alpha,r,q)(s-\alpha)^m
\end{equation}
and is valid for $s \in D_\alpha.$ 
\end{theorem}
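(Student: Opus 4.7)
The plan is to exploit the fact that $\zeta(s;r,q)$ has a meromorphic continuation to $\mathbb{C}$ whose only singularity is a simple pole at $s=1$; this follows from \eqref{HTS} together with the standard continuation of the Hurwitz zeta function. Since the disk $D_\alpha$ is centered at $\alpha$ with radius $|1-\alpha|$, it contains no singularity of $\zeta(s;r,q)$, so this function is holomorphic on $D_\alpha$ and admits a Taylor expansion there. Because the $m$-th Taylor coefficient at $s=\alpha$ equals $\zeta^{(m)}(\alpha;r,q)/m!$, the expansion \eqref{TSP} reduces to the identity
$$
\zeta^{(m)}(\alpha;r,q)=(-1)^m\,\zeta_m(\alpha,r,q),\qquad m\ge 0.
$$

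To prove this identity, I would first write the analytic continuation of $\zeta(s;r,q)$ into the strip $0<\Re(s)<1$ by Abel summation: for $0<\Re(s)<1$,
$$
\zeta(s;r,q)=\lim_{x\to\infty}\left(\sum_{\substack{n\le x\\ n\equiv r\bmod q}}\frac{1}{n^s}-\frac{x^{1-s}}{q(1-s)}\right),
$$
the convergence being locally uniform. Differentiating $m$ times at $s=\alpha$ under the limit, the derivative of $n^{-s}$ contributes $(-\log n)^m$, while Leibniz's rule applied to $x^{1-s}/((1-s)q)$ produces
$$
\frac{d^m}{ds^m}\left.\frac{x^{1-s}}{q(1-s)}\right|_{s=\alpha}=\frac{1}{q}\sum_{i=0}^{m}\binom{m}{i}(-1)^{m-i}(\log x)^{m-i}\,\frac{i!\,x^{1-\alpha}}{(1-\alpha)^{i+1}}=\frac{(-1)^m}{q}\,I_m(x,\alpha),
$$
using $(-1)^{m-i}=(-1)^m(-1)^i$ and the identity $\binom{m}{i}\,i!=\Perms{m}{i}$. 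Pulling out the common factor $(-1)^m$ and invoking Definition \ref{MND}, whose limit exists by \hyperref[MLET]{Proposition \ref{MLET}}, yields
$$
\zeta^{(m)}(\alpha;r,q)=(-1)^m\lim_{x\to\infty}\left(H_m(x,\alpha,r,q)-\frac{I_m(x,\alpha)}{q}\right)=(-1)^m\,\zeta_m(\alpha,r,q),
$$
which is exactly what is needed.

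The main technical obstacle is justifying termwise differentiation under the limit. I would handle this by sharpening the Abel-summation step to an error of the form $O(x^{-\Re(s)})$ that is uniform for $s$ in a small closed disk around $\alpha$ contained in $\{0<\Re(s)<1\}$; Weierstrass's theorem on locally uniformly convergent sequences of holomorphic functions then gives convergence of all derivatives on such a disk, allowing the evaluation at $s=\alpha$ performed above. Once the coefficient identity is established, the convergence of the series on all of $D_\alpha$ is automatic from the holomorphy and radius-of-convergence remarks of the opening paragraph.
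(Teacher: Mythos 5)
Your proposal is correct, and it reaches \eqref{TSP} by a somewhat different route than the paper. The paper also starts from Abel/Stieltjes summation, but it organizes the computation differently: it splits off the polar part explicitly, writing $\zeta(s;r,q)=\frac{1}{q(s-1)}+\int_1^\infty t^{-s}\,d\bigl(\lfloor\frac{t-r}{q}\rfloor-\frac{t-r}{q}\bigr)$, then Taylor-expands $\frac{1}{s-1}$ and $t^{-s}$ about $s=\alpha$ separately, identifies the resulting coefficient $\lim_N\bigl(H_m(N,\alpha,r,q)-\frac{1}{q}\int_1^N\frac{\log^m t}{t^\alpha}\,dt\bigr)$ as $\zeta_m(\alpha,r,q)+\frac{(-1)^m m!}{q(1-\alpha)^{m+1}}$ via Proposition \ref{MLET}, and finishes by cancelling that extra term against the expansion of the pole. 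You instead keep the pole inside the subtracted term $\frac{x^{1-s}}{q(1-s)}$, justify termwise differentiation of the limit by local uniformity and Weierstrass's theorem, and observe via Leibniz that the $m$-th derivative of $\frac{x^{1-s}}{q(1-s)}$ at $s=\alpha$ is exactly $\frac{(-1)^m}{q}I_m(x,\alpha)$, so that Definition \ref{MND} applies verbatim and no cancellation is needed. Your version has the merit of explaining where the expression $I_m(x,\alpha)$ comes from and of making the analytic interchange (differentiation under a locally uniform limit) explicit, whereas the paper's version leaves the interchange of the sum over $m$ with the improper Stieltjes integral implicit; the paper's version, in return, produces the full Taylor series in one stroke rather than coefficient by coefficient. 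Both arguments rest on the same two pillars — the Abel-summation continuation into $\Re(s)>0$ and the existence of the limits $\zeta_m(\alpha,r,q)$ from Proposition \ref{MLET} — so the difference is one of bookkeeping rather than substance.
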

Some immediate corollaries of the above theorem are the following.
\begin{corollary}
Let $k \geq 0$ and $d$ be a common divisor of $r$ and $q$. Then we have 
\begin{equation*} 
\zeta_k(\alpha,r,q)=\sum\limits_{j=0}^{k}\dbinom{k}{j}\dfrac{\log^{k-j} d}{d^{\alpha}}  \zeta_j\left(\alpha,\frac{r}{d},\frac{q}{d}\right).
\end{equation*}
\end{corollary}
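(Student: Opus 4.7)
The plan is to reduce the identity to a simple factorization of the partial zeta function, and then to read off the Taylor coefficients via Theorem \ref{TSF}.

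First, I would observe that if $d$ is a common divisor of $r$ and $q$, then every $n$ with $n\equiv r\pmod q$ is automatically divisible by $d$. Writing $n=dm$, the congruence $n\equiv r\pmod q$ becomes $m\equiv r/d\pmod{q/d}$. Hence, for $\Re(s)>1$,
\begin{equation*}
\zeta(s;r,q)=\sum_{\substack{m\geq 1\\ m\equiv r/d\bmod q/d}}\frac{1}{(dm)^s}=d^{-s}\,\zeta\!\left(s;\tfrac{r}{d},\tfrac{q}{d}\right),
\end{equation*}
and by meromorphic continuation (via \eqref{HTS}) the identity persists on $\mathbb{C}\setminus\{1\}$. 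In particular it holds on the disc $D_\alpha$, where both sides are analytic since $\alpha\in(0,1)$.

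Next I would apply Theorem \ref{TSF} to expand each partial zeta function about $s=\alpha$, and separately expand the entire factor $d^{-s}$ as
\begin{equation*}
d^{-s}=d^{-\alpha}e^{-(s-\alpha)\log d}=\sum_{i=0}^{\infty}\frac{(-1)^i \log^i d}{d^{\alpha}\,i!}(s-\alpha)^i.
\end{equation*}
Multiplying this with the Taylor series of $\zeta(s;r/d,q/d)$ given by \eqref{TSP} and collecting like powers of $(s-\alpha)$ via the Cauchy product gives, for the coefficient of $(s-\alpha)^k$,
\begin{equation*}
\frac{(-1)^k}{k!}\sum_{j=0}^{k}\binom{k}{j}\frac{\log^{k-j} d}{d^{\alpha}}\,\zeta_j\!\left(\alpha,\tfrac{r}{d},\tfrac{q}{d}\right).
\end{equation*}

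Finally, I would compare this with the Taylor coefficient $(-1)^k\zeta_k(\alpha,r,q)/k!$ from the expansion of $\zeta(s;r,q)$ itself, and the stated identity follows at once. There is no real obstacle here: the only thing to be careful about is that the radii of convergence of the three series involved (the two Taylor series and the entire series for $d^{-s}$) all cover $D_\alpha$, which is clear because $d^{-s}$ is entire and both Taylor expansions are valid on $D_\alpha$ by Theorem \ref{TSF}, so the Cauchy product is legitimate throughout this common disc.
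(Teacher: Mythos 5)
Your proposal is correct and follows essentially the same route as the paper, which simply notes that the corollary "follows from the equality of the Laurent series expansions of $\zeta(s;r,q)$ and $\frac{1}{d^s}\zeta(s;\frac{r}{d},\frac{q}{d})$ at $s=\alpha$"; you have merely written out the substitution $n=dm$ and the Cauchy product that the paper leaves implicit. The coefficient computation checks out, so no further comment is needed.
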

\begin{proof}
Follows from the equality of the Laurent series expansion  of the two functions $\zeta(s;r,q)$ and $\frac{1}{d^s}\zeta(s;\frac{r}{d},\frac{q}{d})$ at the point $s=\alpha$.
\end{proof}
\begin{corollary}
The Taylor series expansion of Hurwitz zeta function $\zeta\left(s,\dfrac{r}{q}\right)$ around the point $s=\alpha$ is given by 
\begin{equation*}
\zeta\left(s,\dfrac{r}{q}\right)=\sum\limits_{m=0}^{\infty}\left( q^{\alpha}\sum\limits_{j=0}^{m}(-1)^{m-j}\dfrac{\log^j q}{j!}\dfrac{\zeta_{m-j}(\alpha,r,q)}{(m-j)!} \right)(s-\alpha)^m
\end{equation*}
and is valid for $s \in D_\alpha.$ 
\end{corollary}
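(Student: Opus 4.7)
The plan is to deduce this corollary directly from Theorem~\ref{TSF} together with the identity $\zeta(s;r,q) = q^{-s}\zeta(s,r/q)$ given in \eqref{HTS}, by multiplying two absolutely convergent power series. First I would rewrite the identity \eqref{HTS} as
\[
\zeta\!\left(s,\tfrac{r}{q}\right) = q^{s}\,\zeta(s;r,q),
\]
so that the desired expansion reduces to computing the Taylor series of the right-hand side at $s=\alpha$.

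Second, I would expand the factor $q^{s}$ around $s=\alpha$. Writing $q^{s} = q^{\alpha}\cdot q^{s-\alpha} = q^{\alpha}\exp\!\bigl((s-\alpha)\log q\bigr)$ gives the entire Taylor series
\[
q^{s} = q^{\alpha}\sum_{j=0}^{\infty}\frac{\log^{j}q}{j!}(s-\alpha)^{j},
\]
which is valid on all of $\mathbb{C}$ and hence in particular on the disk $D_{\alpha}$.

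Third, I would invoke Theorem~\ref{TSF} to substitute the Taylor expansion \eqref{TSP} of $\zeta(s;r,q)$ on $D_{\alpha}$:
\[
\zeta(s;r,q) = \sum_{k=0}^{\infty}\frac{(-1)^{k}}{k!}\zeta_{k}(\alpha,r,q)(s-\alpha)^{k}.
\]
Both series converge absolutely on $D_{\alpha}$ (the first trivially, the second by Theorem~\ref{TSF}), so the Cauchy product is legitimate and yields
\[
\zeta\!\left(s,\tfrac{r}{q}\right)
= q^{\alpha}\sum_{m=0}^{\infty}\Biggl(\sum_{j=0}^{m}\frac{\log^{j}q}{j!}\cdot\frac{(-1)^{m-j}}{(m-j)!}\zeta_{m-j}(\alpha,r,q)\Biggr)(s-\alpha)^{m},
\]
which is exactly the claimed formula. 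There is essentially no hard step here: the only point needing mild care is justifying the Cauchy product, which follows at once from absolute convergence of both factors on $D_{\alpha}$.
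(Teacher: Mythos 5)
Your proof is correct and is essentially the paper's own argument: the paper likewise obtains the corollary by expanding the power of $q$ from the identity \eqref{HTS} as a Taylor series about $s=\alpha$ and multiplying it against the expansion of $\zeta(s;r,q)$ from \hyperref[TSF]{Theorem \ref{TSF}}. The only difference is cosmetic (you solve for $\zeta(s,r/q)=q^{s}\zeta(s;r,q)$ and expand $q^{s}$ directly, which is the cleaner way to read the paper's one-line instruction), and your justification of the Cauchy product via absolute convergence on $D_{\alpha}$ is exactly the mild care the step requires.
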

\begin{proof}
Expand $q^{-s}$ at the point $s=\alpha$ in the identity \eqref{HTS} and then use the above theorem.
\end{proof}
In the special  case when $r=1$ and $q=1$ we recover from Eq.~\eqref{TSP}, the expansion of $\zeta(s)$ as given in the equation 2.3 of \cite{ALI}\footnote{There seems to be a missing term $\frac{k!}{(s^{'}-1)^{k+1}}$ in the expression 2.4 for $\gamma_k(s^{'})$}. Hence, the constants $\zeta_k(\alpha,1,1)$ coincides with $\gamma_k(s^{'})$\hyperlink{cite.ALI}{\cite[Equation 2.4]{ALI}} which were used to express integrals involving error terms of Piltz divisor problem \hyperlink{cite.ALI}{\cite[Section 4.1]{ALI}}.
For work done related to Dirichlet and Piltz divisor problem see \hyperlink{cite.ARZ}{\cite[Chapter 13]{ARZ}} and \hyperlink{cite.TITC}{\cite[Chapter 12]{TITC}}. Here we consider a more general Dirichlet divisor problem with congruence conditions \bb[\citenum{LZM},\citenum{LKUI},\citenum{MWN},\citenum{NWGT},\citenum{NWG}] which is the study of the error term $\Delta_{2}(x;r_1,q_1,r_2,q_2)$ defined by the equation   
\begin{equation} \label{ETE}
\Delta_{2}(x;r_1,q_1,r_2,q_2)= \sum\limits_{n \leq x}d_{*}(n)-\text{Res}_{s=1}\left( \zeta(s;r_1,q_1)\zeta(s;r_2,q_2)\dfrac{x^s}{s} \right)
\end{equation}
where $d_{*}(n)$ is the number of elements in the set
\begin{align*}
\{ (n_1,n_2) \in \mathbb{N}^2 : n=n_1n_2, n_1 \equiv r_1 \bmod q_1, n_2 \equiv r_2 \bmod q_2 \}.
\end{align*}
It is known from Richert's work \cite{RIC} and Huxley's estimates \cite{HUX} that 
\begin{equation} \label{BFDE}
\Delta_2(q_1q_2x;r_1,q_1,r_2,q_2) \ll x^{\frac{131}{416}}(\log x)^{\frac{26947}{8320}}.
\end{equation}
Here we express a family of integrals involving the error term $\Delta_2(x;r_1,q_1,r_2,q_2)$ similarly to the results obtained for $\Delta_k$ \bb[\citenum{FTA},\citenum{LAV},\citenum{SIT}] in the case of classical Dirichlet divisor problem.  
\begin{theorem} \label{ETT}
Let $\tilde{\Delta}_2(x)$ denote the error term $\Delta_2(x;r_1,q_1,r_2,q_2)$ given by \eqref{ETE}. Then for any non negative integer $k$ we have
\begin{equation} \label{IEE}
\begin{split}
\int_{1}^{\infty}&\dfrac{\log^kx - k \log^{k-1}x}{x^2}\tilde{\Delta}_{2}(x) dx  \\
&= \sum\limits_{i=0}^{k}\dbinom{k}{i}\gamma_{k-i}(r_2,q_2)\gamma_i(r_1,q_1)  + \left\lfloor \dfrac{1}{k+1} \right\rfloor \left( \dfrac{1}{q_1q_2}-\dfrac{\gamma_0(r_1,q_1)}{q_2} - \dfrac{\gamma_0(r_2,q_2)}{q_1} \right) \\
& \quad - \dfrac{\gamma_{k+1}(r_2,q_2)}{q_1(k+1)}-\dfrac{\gamma_{k+1}(r_1,q_1)}{q_2(k+1)}
\end{split}
\end{equation}
and for $ \alpha \in \left(\dfrac{131}{416},1\right)$ we have
\begin{equation} \label{IEE2}
\begin{split}
\int_{1}^{\infty}&\dfrac{\alpha\log^kx - k \log^{k-1}x}{x^{1+\alpha}}\tilde{\Delta}_{2}(x) dx \\
&= \sum\limits_{l=0}^{k}\zeta_l(\alpha,r_1,q_1)\zeta_{k-l}(\alpha,r_2,q_2) +\left\lfloor \dfrac{1}{k+1} \right\rfloor \left( \dfrac{1}{q_1q_2}-\dfrac{\gamma_0(r_1,q_1)}{q_2} - \dfrac{\gamma_0(r_2,q_2)}{q_1} \right) \\
& - \dfrac{1}{q_1q_2}\left(\dfrac{(k+1)!}{(\alpha-1)^{k+2}}\right) - \left( \dfrac{\gamma_0(r_1,q_1)}{q_2} + \dfrac{\gamma_0(r_2,q_2)}{q_1} \right)\left(\dfrac{k!}{(\alpha-1)^{k+1}}\right).
\end{split}
\end{equation} 
\end{theorem}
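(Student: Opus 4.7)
The overall plan is to build a Mellin-type identity relating $\tilde{\Delta}_2(x)$ to the Laurent-regularized product $\zeta(s;r_1,q_1)\zeta(s;r_2,q_2)$, analytically continue it to a half-plane containing the closed interval $[131/416,1]$, and then recover both statements by differentiating $k$ times in $s$ and evaluating at $s=1$ and $s=\alpha$ respectively.

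\textbf{Step 1 (Mellin representation of $\tilde{\Delta}_2$).} For $\Re(s)>1$, Abel summation on the Dirichlet series $\zeta(s;r_1,q_1)\zeta(s;r_2,q_2) = \sum_n d_*(n)/n^s$ gives
$$\zeta(s;r_1,q_1)\zeta(s;r_2,q_2) \;=\; s\int_{1}^{\infty}\frac{\sum_{n\le x}d_*(n)}{x^{s+1}}\,dx.$$
Multiplying two copies of \eqref{LSH} and reading off the residue in \eqref{ETE} produces the decomposition $\sum_{n\le x} d_*(n) = \tfrac{x\log x}{q_1q_2} + C'x + \tilde{\Delta}_2(x)$ with $C' = \tfrac{\gamma_0(r_1,q_1)}{q_2}+\tfrac{\gamma_0(r_2,q_2)}{q_1}-\tfrac{1}{q_1q_2}$. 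Substituting this and using $\int_{1}^{\infty}\log^{j}x\,x^{-s}dx = j!/(s-1)^{j+1}$ on the main-term piece yields the clean identity
$$F(s)\;:=\;\zeta(s;r_1,q_1)\zeta(s;r_2,q_2) - \frac{s}{q_1q_2(s-1)^2} - \frac{sC'}{s-1} \;=\; s\int_{1}^{\infty}\frac{\tilde{\Delta}_2(x)}{x^{s+1}}\,dx.$$

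\textbf{Step 2 (Analytic continuation and differentiation under the integral).} Huxley's bound \eqref{BFDE} makes the right-hand integral absolutely convergent on $\Re(s)>131/416$, so the identity extends there by analytic continuation. The same bound with an extra factor of $\log^{k}x$ still produces a locally uniform majorant on compacta in that half-plane, which is exactly what is needed to differentiate $k$ times in $s$ under the integral sign. Applying Leibniz to the kernel $s\cdot x^{-s-1}$ gives
$$F^{(k)}(s)\;=\;(-1)^{k}\int_{1}^{\infty}\frac{s\log^{k}x + k\log^{k-1}x}{x^{s+1}}\,\tilde{\Delta}_2(x)\,dx,$$
so $F^{(k)}(1)$ is $(-1)^{k}$ times the left side of \eqref{IEE} and $F^{(k)}(\alpha)$ is $(-1)^{k}$ times the left side of \eqref{IEE2}.

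\textbf{Step 3 (Evaluation of $F^{(k)}$ on the spectral side).} For \eqref{IEE}, substitute \eqref{LSH} into both factors, form the Cauchy product, and subtract the explicit Laurent expansions of $\tfrac{s}{q_1q_2(s-1)^2}$ and $\tfrac{sC'}{s-1}$ around $s=1$; the double and simple poles cancel, and the $(s-1)^{k}$ coefficient of $F$ assembles as the convolution $\sum_{i}\binom{k}{i}\gamma_{i}(r_1,q_1)\gamma_{k-i}(r_2,q_2)$ plus the extra piece $-\tfrac{1}{k+1}\bigl(\tfrac{\gamma_{k+1}(r_1,q_1)}{q_2}+\tfrac{\gamma_{k+1}(r_2,q_2)}{q_1}\bigr)$ produced when the cross terms $g_i(s)/(s-1)$ shed their residues. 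Multiplying by $k!$ and by $(-1)^{k}$ recovers the right side of \eqref{IEE}; the $\lfloor 1/(k+1)\rfloor$ correction only appears at $k=0$ because that is precisely when the constant $-C'$ also contributes to $F(1)$. The derivation of \eqref{IEE2} is identical in spirit: use Theorem \ref{TSF} instead of \eqref{LSH} to Taylor-expand each factor at $s=\alpha$, apply Leibniz to the product to obtain the diagonal sum $\sum_{l}\binom{k}{l}\zeta_{l}(\alpha,r_1,q_1)\zeta_{k-l}(\alpha,r_2,q_2)$, and observe that the $k$-th derivatives of $\tfrac{s}{q_1q_2(s-1)^2}$ and $\tfrac{sC'}{s-1}$ evaluated at $s=\alpha$ are precisely the explicit $(\alpha-1)^{-j}$ corrections displayed on the right of \eqref{IEE2}, with the $\lfloor 1/(k+1)\rfloor$ term again capturing the constant that appears only when $k=0$.

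\textbf{Main obstacle.} The technically delicate step is the analytic stage of Step 2: Huxley's bound must be used not just to secure convergence at a single $s$ but to supply a majorant uniform on compact neighbourhoods of both $s=1$ and $s=\alpha$, with room to absorb any number of factors of $\log x$ and thereby legitimise the $k$-fold differentiation under the integral and the comparison with the Laurent/Taylor data on the spectral side. Once that analytic framework is set up, the rest of the proof is algebraic bookkeeping with the expansions \eqref{LSH} and Theorem \ref{TSF}.
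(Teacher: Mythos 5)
Your argument is essentially the paper's own proof: both represent $\zeta(s;r_1,q_1)\zeta(s;r_2,q_2)$ as a Mellin--Stieltjes transform of the summatory function of $d_*$, peel off the polynomial main term so that what remains is $s\int_1^\infty \tilde{\Delta}_2(x)x^{-s-1}\,dx$ (the paper's $S_2$), continue it to $\Re(s)>\tfrac{131}{416}$ via Huxley's bound \eqref{BFDE}, and then match $k$-th Taylor coefficients at $s=1$ and $s=\alpha$ against the product of the expansions \eqref{LSH} and Theorem \ref{TSF}, with the $\lfloor 1/(k+1)\rfloor$ term arising exactly as you say from the constant part of $sC'/(s-1)$. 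Two small points: Leibniz actually gives $(-1)^k\bigl(s\log^k x - k\log^{k-1}x\bigr)x^{-s-1}$ (your $+k\log^{k-1}x$ is a sign slip, and the minus sign is precisely what matches the stated integrands), and your Cauchy product at $s=\alpha$ correctly carries the factor $\binom{k}{l}$, which appears to have been dropped from the displayed right-hand side of \eqref{IEE2}.
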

In the special case when $k=0$ and $r_1=q_1=r_2=q_2=1$ the identity \eqref{IEE} reduces to the
one given by Lavrik, Israilov and \"Edgorov in \cite{LAV}\footnote{The authors in \cite{LAV} use a slightly different definition for the constant $\gamma_k$.}. We remark here that by using the idea of Sitaramachandra Rao in \cite{SIT}, the proof of \hyperref[ETT]{Theorem \ref{ETT}} can be generalized for the general error term $\tilde{\Delta}_k$.  \\

 Now we discuss the next consequence of \hyperref[TSF]{Theorem \ref{TSF}} in connection with the Dirichlet series with periodic coefficients.For a $q$ periodic arithmetic function $f$, the Laurent series expansion of $L(s,f)$ at the   
point $s=1$ was given by Ishibashi and Kanemitsu in \cite{IKA}. They showed that \hyperlink{cite.IKA}{\cite[Theorem 2]{IKA}} the Laurent(or Taylor) series expansion of $L(s,f)$ at $s=1$ is given by 
\begin{equation} \label{LSEFP}
L(s,f)=\dfrac{\gamma_{-1}(f)}{s-1}+\sum\limits_{n=1}^{\infty}\dfrac{\gamma_{n}(f)}{n!}(s-1)^{n}
\end{equation}
where a closed form expression of $\gamma_{i}$ is explicitly given for $i=-1,0,1,2$. 
Such an expansion was used to study problems related to the product of L functions \hyperlink{cite.IKA}{\cite[Section 2]{IKA}}. We state the following result regarding the Laurent series expansion for $L(s,f)$ which follows directly from the identity 
\begin{equation*}
L(s,f)=\sum\limits_{r=1}^{q}f(r)\zeta(s;r,q)
\end{equation*}
and \hyperref[TSF]{Theorem \ref{TSF}}.
\begin{theorem} \label{PLE}
For $\alpha \in (0,1)$ and a $q$ periodic arithmetic function $f$, the Dirichlet L function $L(s,f)$ has the following Taylor series expansion at the point $s=\alpha$  
\begin{equation} \label{PLEE}
L(s,f)=\sum\limits_{k=0}^{\infty}\dfrac{(-1)^k}{k!}\left( \sum\limits_{r=1}^{q} f(r)\zeta_k(\alpha,r,q) \right)(s-\alpha)^k
\end{equation}
which is valid for $s \in D_\alpha$. Moreover, the $k$-th derivative of $L(s,f)$ at $s=\alpha$ has the following closed form expression
\begin{equation} \label{LKN}
L^{(k)}(\alpha,f)=(-1)^k \sum\limits_{r=1}^{q} f(r)\zeta_k(\alpha,r,q).
\end{equation}
\end{theorem}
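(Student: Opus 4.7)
The plan is to reduce the statement for $L(s,f)$ to the corresponding statement for the partial zeta function $\zeta(s;r,q)$ via the standard decomposition by residue classes, and then invoke Theorem \ref{TSF}. First I would partition the series defining $L(s,f)$ according to the residue of $n$ modulo $q$: for $\Re(s)>1$,
\begin{equation*}
L(s,f)=\sum_{n=1}^{\infty}\dfrac{f(n)}{n^s}=\sum_{r=1}^{q}f(r)\sum_{\substack{n\geq 1 \\ n\equiv r\bmod q}}\dfrac{1}{n^s}=\sum_{r=1}^{q}f(r)\,\zeta(s;r,q),
\end{equation*}
using $q$-periodicity of $f$. Since the right-hand side is a finite $\mathbb{C}$-linear combination of functions each of which is meromorphic on $\mathbb{C}$ with only a simple pole at $s=1$, this identity persists by analytic continuation on $\mathbb{C}\setminus\{1\}$.

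Next I would note that $D_{\alpha}$ is the open disk of radius $|\alpha-1|=1-\alpha$ centered at $\alpha\in(0,1)$, so $1\notin D_\alpha$ and each $\zeta(s;r,q)$ is holomorphic on $D_\alpha$. Theorem \ref{TSF} therefore supplies the Taylor expansion
\begin{equation*}
\zeta(s;r,q)=\sum_{k=0}^{\infty}\dfrac{(-1)^k}{k!}\zeta_k(\alpha,r,q)(s-\alpha)^k,\qquad s\in D_\alpha,
\end{equation*}
for each $r\in\{1,\dots,q\}$. Substituting this into the decomposition above and interchanging the finite sum over $r$ with the infinite power series (which is trivially valid, since a finite linear combination of convergent series converges to the linear combination and the coefficients can be regrouped termwise) yields \eqref{PLEE}.

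Finally, the closed form \eqref{LKN} follows at once from uniqueness of Taylor coefficients: comparing \eqref{PLEE} with $L(s,f)=\sum_{k\geq 0}\frac{L^{(k)}(\alpha,f)}{k!}(s-\alpha)^k$ gives
\begin{equation*}
\dfrac{L^{(k)}(\alpha,f)}{k!}=\dfrac{(-1)^k}{k!}\sum_{r=1}^{q}f(r)\zeta_k(\alpha,r,q),
\end{equation*}
which rearranges to the claimed identity. I do not anticipate any genuine obstacle in this argument: the only point requiring a brief remark is the holomorphy of each $\zeta(s;r,q)$ on $D_\alpha$, ensuring that the pole of the partial zeta functions at $s=1$ plays no role. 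All of the analytic content has been absorbed into Theorem \ref{TSF}, so the proof here is essentially bookkeeping.
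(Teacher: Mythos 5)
Your proposal is correct and follows exactly the route the paper takes: the authors state that the theorem ``follows directly from the identity $L(s,f)=\sum_{r=1}^{q}f(r)\zeta(s;r,q)$ and Theorem \ref{TSF},'' which is precisely your decomposition by residue classes followed by termwise substitution of the Taylor expansions. Your write-up merely supplies the routine justifications (analytic continuation of the identity off $s=1$, holomorphy on $D_\alpha$, uniqueness of Taylor coefficients) that the paper leaves implicit.
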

An infinite series expression for the constants $\zeta_k(\alpha,r,q)$ can be given as follows.
\begin{corollary} \label{PLEC}
For any $a$ such that $(a,q)=1$ and $1\leq a \leq q$  we have
\begin{equation*}
\zeta_m(\alpha,a,q)=\dfrac{m!}{q(\alpha-1)^{m+1}}+\dfrac{1}{\phi(q)}\sum\limits_{k \geq m}\sum\limits_{\chi}\dfrac{(-1)^{k}\bar{\chi}(a)\gamma_k(\chi)}{(k-m)!}(1-\alpha)^{k-m}
\end{equation*}
where $\chi$ varies over all the Dirichlet characters modulo $q$.
\end{corollary}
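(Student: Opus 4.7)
The plan is to derive the identity by combining three ingredients: orthogonality of Dirichlet characters modulo $q$, the Laurent/Taylor expansions of the Dirichlet $L$-functions $L(s,\chi)$ at $s=1$ (which define the constants $\gamma_k(\chi)$), and Theorem \ref{TSF}, which identifies the Taylor coefficient of $\zeta(s;a,q)$ at $s=\alpha$ with $(-1)^m \zeta_m(\alpha,a,q)/m!$.

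First I will use character orthogonality. Since $(a,q)=1$,
$$\zeta(s;a,q)=\frac{1}{\phi(q)}\sum_{\chi\bmod q}\bar\chi(a)L(s,\chi),$$
initially for $\Re(s)>1$ and extending meromorphically to $\mathbb{C}$. Only the principal character $\chi_0$ contributes a pole, at $s=1$ with residue $\phi(q)/q$, so the singular part of the left-hand side is $1/(q(s-1))$ after the factors $\phi(q)/q$, $1/\phi(q)$ and $\bar\chi_0(a)=1$ combine. The regular part of each $L(s,\chi)$ feeds in the Laurent--Stieltjes constants $\gamma_k(\chi)$.

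The second step is to re-expand this Laurent series about $s=\alpha$. Writing $s-1=(s-\alpha)-(1-\alpha)$, the binomial theorem converts each $(s-1)^k$ into a finite sum of powers of $s-\alpha$, while
$$\frac{1}{s-1}=-\sum_{m\ge 0}\frac{(s-\alpha)^m}{(1-\alpha)^{m+1}}$$
handles the pole term; this geometric series converges precisely on $|s-\alpha|<1-\alpha$, i.e., on $D_\alpha$, which is exactly the domain of validity of Theorem \ref{TSF}. After swapping summations --- legitimate by absolute convergence on $D_\alpha$, since each regularized $L(s,\chi)$ is entire --- I will read off the coefficient of $(s-\alpha)^m$: the pole part contributes $-1/(q(1-\alpha)^{m+1})$ and the regular parts contribute a double sum over $k\ge m$ and over $\chi$.

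Finally, comparing this coefficient with the Theorem \ref{TSF} value $(-1)^m\zeta_m(\alpha,a,q)/m!$ and multiplying through by $(-1)^m m!$ produces the stated formula: the pole contribution becomes $m!/(q(\alpha-1)^{m+1})$ since $(\alpha-1)^{m+1}=(-1)^{m+1}(1-\alpha)^{m+1}$, and the double sum rearranges to the claimed right-hand side. The main obstacle is sign-and-factorial bookkeeping: three sources of alternating signs --- the $(-1)^k/k!$ from the definition of $\gamma_k(\chi)$, the $(-1)^{k-m}\binom{k}{m}$ from the binomial expansion, and the $(-1)^m/m!$ TSF normalization --- must collapse to leave precisely the single visible $(-1)^k$ and the clean factor $1/(k-m)!$ in the final expression.
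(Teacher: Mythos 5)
Your proposal is correct and is essentially the paper's argument: both combine the Laurent expansion of $L(s,\chi)$ at $s=1$, the Taylor expansion at $s=\alpha$ coming from Theorem \ref{TSF} (equivalently \eqref{LKN}), and orthogonality of Dirichlet characters. The only cosmetic difference is that the paper equates two expressions for the derivative $L^{(m)}(\alpha,\chi)$ and applies orthogonality at the end, whereas you apply orthogonality first and then compare Taylor coefficients of the re-expanded series --- the same computation in a different order.
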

\begin{proof}
Using \eqref{LSEFP} and \eqref{PLEE} we can obtain two expressions for $L^{(m)}(\alpha,\chi).$ Since they must be equal we get 
\begin{equation*}
\sum\limits_{r=1}^{q}\chi(r)\zeta_m(\alpha,r,q)=\dfrac{\gamma_{-1}(\chi)m!}{(\alpha-1)^{m+1}}+\sum\limits_{k \geq m} \dfrac{(-1)^k\gamma_k(\chi)}{(k-m)!}(1-\alpha)^{k-m}.
\end{equation*}
Now use the well known orthogonality relation of Dirichlet characters $\chi$ to deduce the result.
\end{proof}
The value of $L(s,\chi)$ and its derivatives at the point $s=\frac{1}{2}$ is of special interest to many authors \bb[\citenum{BMV},\citenum{BMI},\citenum{ISA},\citenum{MVA},\citenum{SOK}]. It is conjectured that $L(\frac{1}{2},\chi) \neq 0$ for all primitive Dirichlet characters $\chi.$ Recently Murty and Tanabe \cite{MTA} studied the relation between the arithmetic nature of $e^{\gamma}$ and non vanishing of central values of Artin L functions. From \hyperref[PLE]{Theorem \ref{PLE}} we can observe the following condition.
\begin{corollary}
For any $k$ if the set $\{\zeta_k(\frac{1}{2},r,q) \mid 1\leq r \leq q \}$ is linearly independent over $\bar{\mathbb{Q}}$ then $L^{(k)}(\frac{1}{2},\chi) \neq 0$ for all Dirichlet character $\chi$ modulo $q$.
\end{corollary}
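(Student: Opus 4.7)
The plan is to apply the closed form expression (\ref{LKN}) from Theorem \ref{PLE} to the Dirichlet character $\chi$ viewed as a $q$-periodic arithmetic function, evaluated at $\alpha = \tfrac{1}{2}$, and then argue by contrapositive.

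Concretely, I would first substitute $f = \chi$ and $\alpha = \tfrac{1}{2}$ into (\ref{LKN}) to obtain
\begin{equation*}
L^{(k)}\!\left(\tfrac{1}{2},\chi\right) = (-1)^k \sum_{r=1}^{q} \chi(r)\, \zeta_k\!\left(\tfrac{1}{2},r,q\right).
\end{equation*}
Next I would observe that each value $\chi(r)$ is an algebraic number: it is either $0$ (when $\gcd(r,q)>1$) or a $\phi(q)$-th root of unity, hence lies in $\bar{\mathbb{Q}}$. Moreover $\chi(1)=1 \neq 0$, so the vector of coefficients $(\chi(1),\chi(2),\dots,\chi(q))$ is a nonzero element of $\bar{\mathbb{Q}}^{\,q}$.

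Now argue contrapositively: suppose $L^{(k)}(\tfrac{1}{2},\chi)=0$ for some Dirichlet character $\chi$ modulo $q$. The displayed identity then gives
\begin{equation*}
\sum_{r=1}^{q} \chi(r)\, \zeta_k\!\left(\tfrac{1}{2},r,q\right) = 0,
\end{equation*}
which is a nontrivial $\bar{\mathbb{Q}}$-linear relation among the constants $\zeta_k(\tfrac{1}{2},r,q)$ for $1 \leq r \leq q$. This contradicts the assumed $\bar{\mathbb{Q}}$-linear independence of the set $\{\zeta_k(\tfrac{1}{2},r,q) \mid 1 \leq r \leq q\}$, completing the proof.

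There is essentially no obstacle here; the content is entirely in Theorem \ref{PLE}, and the only small verification is the elementary fact that character values are algebraic and that $\chi(1)=1$ guarantees a nontrivial relation. If anything, the subtle point worth noting for the reader is that linear independence is required over $\bar{\mathbb{Q}}$ (not merely over $\mathbb{Q}$) precisely because non-real characters take values in cyclotomic fields strictly larger than $\mathbb{Q}$.
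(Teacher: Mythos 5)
Your proof is correct and follows essentially the same route as the paper: both apply the identity \eqref{LKN} with $f=\chi$ and $\alpha=\tfrac12$ and argue by contradiction that a vanishing derivative would yield a nontrivial $\bar{\mathbb{Q}}$-linear relation among the $\zeta_k(\tfrac12,r,q)$. Your version is slightly more careful in spelling out why the relation is nontrivial (namely $\chi(1)=1$) and why the coefficients lie in $\bar{\mathbb{Q}}$, details the paper leaves implicit.
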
 
\begin{proof}
From the hypothesis it is clear that $0$ does not belong to the given set. Now suppose on the contrary that $L^{(k)}(\frac{1}{2},\chi)=0$. Then from \eqref{LKN} it follows that when $\alpha=1/2$ and $f=\chi$, there exists a non trivial linear combination from the set $\{\zeta_k(\frac{1}{2},r,q) \mid 1\leq r \leq q \}$ over $\bar{\mathbb{Q}}$ which is equal to 0. This contradicts the linear independence of the given set. 
\end{proof}
\section{Proof of the results in \hyperref[SECS]{Section \ref{SECS}}} \label{POTR}
\begin{lemma} \label{Main}
For a non negative integer $k$ and $\alpha \in (0,1)$ we have

\begin{equation*}
\sum\limits_{n \leq x}\dfrac{\log^k n}{n^{\alpha}}= I_k(x,\alpha) + H_k(\alpha) + O\left( \dfrac{\log^k x}{x^{\alpha}}  \right)
\end{equation*}
where $$H_k(\alpha)= \int\limits_{1}^{\infty} \{t\}\left( \dfrac{k \log^{k-1} t-\alpha  \log^{k} t}{t^{1+\alpha}}  \right) dt 
+ \dfrac{(-1)^{k+1}k!}{(1-\alpha)^{k+1}}+ \left\lfloor \dfrac{1}{k+1} \right\rfloor$$ and
$$I_k(x, \alpha)=\sum\limits_{i=0}^{k}\left(\dfrac{\Perms{k}{i}(-1)^i\log^{k-i}x}{(1-\alpha)^{i+1}}\right)x^{1-\alpha}. $$
\end{lemma}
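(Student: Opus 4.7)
The plan is to apply Abel (partial) summation to $f(t) = \log^k t / t^{\alpha}$, evaluate the resulting antiderivative in closed form, and then extend a tail integral to infinity while controlling the error of order $\log^k x / x^{\alpha}$.

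First I would write, for $N = \lfloor x \rfloor$,
\begin{equation*}
\sum_{n \le x} \frac{\log^k n}{n^{\alpha}} = f(1) + \int_1^N f(t)\,dt + \int_1^N \{t\}\,f'(t)\,dt,
\end{equation*}
which is the standard consequence of $d\lfloor t \rfloor = dt - d\{t\}$ together with integration by parts (the boundary terms vanish since $\{1\}=\{N\}=0$). Differentiating gives $f'(t) = (k\log^{k-1}t - \alpha \log^k t)/t^{1+\alpha}$, which is exactly the integrand appearing in the definition of $H_k(\alpha)$, so that piece is already in the right shape. Also $f(1)=\lfloor 1/(k+1)\rfloor$, accounting for the floor term.

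Next I would evaluate the antiderivative $\int \log^k t \, t^{-\alpha}\,dt$ by the substitution $u=\log t$, which reduces it to $\int u^k e^{(1-\alpha)u}\,du$, and then to $(1-\alpha)^{-k-1}\int v^k e^v\,dv$ after $v=(1-\alpha)u$. The standard reduction
\begin{equation*}
\int v^k e^v \,dv = e^v \sum_{i=0}^{k} (-1)^i \Perms{k}{i}\, v^{k-i}
\end{equation*}
(provable by a one-line induction on $k$) translates back to exactly the expression $I_k(x,\alpha)$ after noting $e^v = t^{1-\alpha}$ and $v^{k-i}=(1-\alpha)^{k-i}\log^{k-i}t$. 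Evaluating at $t=1$ only the $i=k$ term survives and yields $(-1)^k k!/(1-\alpha)^{k+1}$, which when subtracted from $I_k(x,\alpha)$ produces the constant $(-1)^{k+1}k!/(1-\alpha)^{k+1}$ visible in $H_k(\alpha)$.

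For the remaining integral, I would split $\int_1^N \{t\} f'(t)\,dt = \int_1^{\infty} \{t\} f'(t)\,dt - \int_N^{\infty} \{t\} f'(t)\,dt$. Convergence of the first piece on $[1,\infty)$ follows from $|f'(t)| \ll \log^k t / t^{1+\alpha}$ together with $\alpha>0$; the tail on $[N,\infty)$ is $O(\log^k N / N^{\alpha}) = O(\log^k x / x^{\alpha})$ by a direct estimate (or one integration by parts if one wants to be explicit). Finally, I would pass from $N=\lfloor x\rfloor$ to $x$ in the main term via
\begin{equation*}
I_k(x,\alpha)-I_k(N,\alpha) = \int_N^{x} \frac{\log^k t}{t^{\alpha}}\,dt = O\!\left(\frac{\log^k x}{x^{\alpha}}\right),
\end{equation*}
which is absorbed into the error. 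Collecting the four pieces — $I_k(x,\alpha)$, the closed-form constant $(-1)^{k+1}k!/(1-\alpha)^{k+1}$, the convergent integral, and $f(1)=\lfloor 1/(k+1)\rfloor$ — gives precisely the claimed expansion. I expect no real obstacle: the only mildly technical step is verifying the antiderivative formula and matching the $\Perms{k}{i}$ pattern, which is a routine induction or a direct check via differentiation.
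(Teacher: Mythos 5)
Your proposal is correct and follows essentially the same route as the paper, whose proof is the one-line instruction ``Apply Abel summation formula and use induction on $k$'': you carry out the Abel summation explicitly, and your induction proving $\int v^k e^v\,dv = e^v\sum_{i=0}^{k}(-1)^i\,\tfrac{k!}{(k-i)!}\,v^{k-i}$ is exactly the inductive step the authors leave implicit (equivalently, repeated integration by parts of $\int_1^x \log^k t\, t^{-\alpha}\,dt$). All the pieces — the identification of $f(1)$ with $\lfloor 1/(k+1)\rfloor$, the boundary value at $t=1$ giving $(-1)^{k+1}k!/(1-\alpha)^{k+1}$, and the two tail estimates of size $O(\log^k x/x^{\alpha})$ — check out.
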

\begin{proof}
Apply Abel summation formula \hyperlink{cite.Ap}{\cite[Theorem 4.2]{Ap}} and use induction on $k$.
\end{proof}

\begin{proposition} \label{MLET}
The limit defined as $\zeta_k(\alpha,r,q)$ exists for $\alpha \in (0,1)$.
\end{proposition}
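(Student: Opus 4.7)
The plan is to decompose the indicator of the arithmetic progression using additive characters modulo $q$, which reduces the problem to two types of subsums that can be controlled separately. Specifically, using the orthogonality relation
\begin{equation*}
\mathbb{1}[n \equiv r \bmod q] = \frac{1}{q}\sum_{a=0}^{q-1} e^{2\pi i a(n-r)/q},
\end{equation*}
I would rewrite
\begin{equation*}
H_k(x,\alpha,r,q) = \frac{1}{q} \sum_{n \leq x} \frac{\log^k n}{n^\alpha} + \frac{1}{q}\sum_{a=1}^{q-1} e^{-2\pi i a r/q} S_a(x),
\qquad S_a(x) := \sum_{n \leq x} \frac{\log^k n \, e^{2\pi i a n/q}}{n^\alpha}.
\end{equation*}

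For the $a=0$ piece, Lemma \ref{Main} gives $\sum_{n \leq x} \log^k n / n^\alpha = I_k(x,\alpha) + H_k(\alpha) + O(\log^k x / x^\alpha)$, so after subtracting $I_k(x,\alpha)/q$ this contribution tends to $H_k(\alpha)/q$ as $x \to \infty$. The remaining task is to show that each $S_a(x)$ with $1 \leq a \leq q-1$ converges to a finite limit; once this is established, the definition of $\zeta_k(\alpha,r,q)$ as a limit is valid.

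To handle $S_a(x)$ for $a \not\equiv 0 \bmod q$, I would apply Abel summation with the monotone--looking weight $\log^k t / t^\alpha$. The key inputs are: (i) the partial sums $A_a(x) = \sum_{n \leq x} e^{2\pi i an/q}$ are uniformly bounded by $1/|1 - e^{2\pi i a/q}|$ since $e^{2\pi i a/q} \neq 1$; (ii) the weight satisfies $\log^k x / x^\alpha \to 0$ as $x \to \infty$ because $\alpha > 0$; and (iii) its derivative $\tfrac{d}{dt}(\log^k t / t^\alpha) = (k\log^{k-1} t - \alpha \log^k t)/t^{1+\alpha}$ is absolutely integrable on $[1,\infty)$ (the integrand is eventually of one sign, and $\int_1^\infty \log^j t / t^{1+\alpha}\,dt < \infty$ for each $j$). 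Abel summation then yields
\begin{equation*}
S_a(x) = A_a(x)\, \frac{\log^k x}{x^\alpha} - \int_1^x A_a(t)\,\frac{d}{dt}\!\left(\frac{\log^k t}{t^\alpha}\right) dt,
\end{equation*}
where the boundary term vanishes and the improper integral converges absolutely as $x \to \infty$. Consequently each $S_a(x)$ has a finite limit, and combining with the $a=0$ analysis establishes the existence of $\zeta_k(\alpha,r,q)$.

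The main obstacle is the apparent non-monotonicity of $\log^k t / t^\alpha$ when $k \geq 1$, which might seem to block a clean Abel summation argument. This is resolved by noting that the derivative is eventually of one sign and, more importantly, only its absolute integrability is needed, which is immediate. As a byproduct, this argument also identifies the limit explicitly in terms of derivatives of the periodic zeta function $F(a/q,\alpha)$, giving Theorem \ref{MST}, but for the existence claim here only the convergence of $S_a(x)$ is required.
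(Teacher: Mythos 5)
Your proof is correct, but it takes a different route from the paper's. The paper writes $H_k(x,\alpha,r,q)$ as a Riemann--Stieltjes integral $\int_{1^-}^{x}\frac{\log^k t}{t^\alpha}\,dA(t)$ against the counting function $A(t)=\big\lfloor\frac{t-r}{q}\big\rfloor=\frac{t-r}{q}-\big\{\frac{t-r}{q}\big\}$, splits off the linear part (whose integral is $\frac{1}{q}\int_1^x\frac{\log^k t}{t^\alpha}\,dt$, matching $I_k(x,\alpha)/q$ up to a constant via Lemma \ref{Main}), and controls the fractional-part contribution by integration by parts: a bounded sawtooth against the absolutely integrable derivative of $\log^k t/t^\alpha$ yields a constant plus $O(\log^k x/x^\alpha)$. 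You instead detect the congruence with additive characters mod $q$, reducing to the full sum (again Lemma \ref{Main}) plus the twisted sums $S_a$ for $a\ne 0$, which converge by Dirichlet's test since the partial character sums are bounded. The two arguments rest on the same analytic inputs --- Lemma \ref{Main} for the main term and the integrability of $\big(\log^k t/t^\alpha\big)'$ --- but yours buys more: it identifies the limit as $\frac{H_k(\alpha)}{q}+\frac{1}{q}\sum_{a=1}^{q-1}e^{-2\pi iar/q}\lim_x S_a(x)$ with $\lim_x S_a(x)=(-1)^kF^{(k)}(a/q,\alpha)$, i.e., it proves Theorem \ref{MST} in the same stroke, whereas the paper establishes existence first and then recovers the Fourier expansion by a separate DFT-and-inversion argument. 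The paper's version is marginally more elementary (no characters) and exhibits the constant $C$ explicitly as an integral against the sawtooth, which is what feeds into the formula for $H_k(\alpha)$.
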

\begin{proof}
Consider the partial sum
$$A(x)=\sum\limits_{\substack{n \leq x \\ n \equiv r \bmod q}}1.$$ Then we have
 \begin{align*}
 H_{k}(x,\alpha,r,q) &= \sum\limits_{\substack{n\leq x \\ n \equiv r\bmod q}} \dfrac{\log^{k} n}{n^{\alpha}}=\int_{1^{-}}^{x}\dfrac{\log^kt}{t^\alpha}dA(t) \\
 &=\int_{1^{-}}^{x}\dfrac{\log^kt}{t^\alpha}d\left(\left(\dfrac{t-r}{q}\right) - \Big\{ \dfrac{t-r}{q} \Big\} \right) \\
 &=\dfrac{1}{q}\int_{1}^{x}\dfrac{\log^kt}{t^\alpha}dt + C + O\left( \dfrac{\log ^k x}{x^\alpha} \right)
 \end{align*}
 for some constant C. Now the result follows from \hyperref[Main]{Lemma \ref{Main}}
\end{proof}
\begin{corollary} \label{MFC}
$\zeta_k(\alpha,r,q)$ satisfy the following properties :
\begin{enumerate}
\item $\zeta_0(\alpha,0,1)= H_0(\alpha)=\zeta(\alpha)$
\item $\zeta_k(\alpha,r \pm q,q)=\zeta_k(\alpha,r,q)$
\item $\sum\limits_{r=1}^{q}\zeta_k(\alpha,r,q)=H_k(\alpha)$
\item \label{4th} $\sum\limits_{j=0}^{q-1}\zeta_k(\alpha,r+jm,mq)=\zeta_k(\alpha,r,m).$
\end{enumerate}
\end{corollary}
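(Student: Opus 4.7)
The plan is to derive all four identities directly from the definition of $\zeta_k(\alpha,r,q)$ together with \hyperref[Main]{Lemma \ref{Main}}, since each statement is fundamentally a statement about how the residue classes modulo $q$ partition the positive integers, combined with the asymptotic $\sum_{n\le x}\log^k n/n^\alpha = I_k(x,\alpha) + H_k(\alpha) + O(\log^k x / x^\alpha)$.

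For part (1), observe that the congruence condition $n\equiv 0\bmod 1$ is satisfied by every positive integer, so $H_k(x,\alpha,0,1)=\sum_{n\le x}\log^k n/n^\alpha$. Hence, applying \hyperref[Main]{Lemma \ref{Main}} with $q=1$, the defining limit collapses exactly to $H_k(\alpha)$. For $k=0$ one must then identify $H_0(\alpha)$ with $\zeta(\alpha)$; I would invoke the classical Abel-summation representation
\[
\zeta(s)=\frac{s}{s-1}-s\int_1^\infty \frac{\{t\}}{t^{s+1}}\,dt\qquad(\Re(s)>0),
\]
specialize to $s=\alpha$, and match it term by term with $H_0(\alpha)=1+\frac{1}{\alpha-1}-\alpha\int_1^\infty \{t\}t^{-1-\alpha}\,dt$ (noting $\lfloor 1/(k+1)\rfloor=1$ when $k=0$).

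Parts (2), (3), (4) are purely combinatorial manipulations of residue classes. For (2), the set $\{n\le x:n\equiv r\bmod q\}$ equals $\{n\le x:n\equiv r\pm q\bmod q\}$, so the two finite sums defining $H_k(x,\alpha,r,q)$ and $H_k(x,\alpha,r\pm q,q)$ agree and the limits coincide. For (3), summing over $r=1,\dots,q$ the identity $\sum_{r=1}^q H_k(x,\alpha,r,q)=\sum_{n\le x}\log^k n/n^\alpha$ holds because the residue classes partition $\{1,\dots,\lfloor x\rfloor\}$; since the subtraction term $q\cdot I_k(x,\alpha)/q=I_k(x,\alpha)$ is exactly what \hyperref[Main]{Lemma \ref{Main}} prescribes, the limit is $H_k(\alpha)$.

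For (4), the key observation is that the residue classes $\{r+jm\bmod mq : 0\le j\le q-1\}$ form a partition of the single class $\{n:n\equiv r\bmod m\}$ inside $\mathbb{Z}$. Consequently
\[
\sum_{j=0}^{q-1}H_k(x,\alpha,r+jm,mq)=H_k(x,\alpha,r,m),
\]
while the aggregate subtracted term is $q\cdot I_k(x,\alpha)/(mq)=I_k(x,\alpha)/m$, which is exactly the term used to define $\zeta_k(\alpha,r,m)$; the stated identity then follows by passing to the limit. I do not expect any serious obstacle here — the only step requiring any real computation is the identification $H_0(\alpha)=\zeta(\alpha)$ in part (1), and even that is a direct comparison with the standard integral representation of the Riemann zeta function.
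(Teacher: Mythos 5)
Your proposal is correct and follows essentially the same route as the paper: parts (1)--(3) are read off from the definition together with Lemma \ref{Main} (the paper cites the standard representation $\zeta(s)=\frac{s}{s-1}-s\int_1^\infty\{t\}t^{-s-1}\,dt$ implicitly for the identification $H_0(\alpha)=\zeta(\alpha)$, which you verify explicitly and correctly), and part (4) is exactly the same partition of the class $r \bmod m$ into the classes $r+jm \bmod mq$. The only difference is that you supply more detail than the paper, which dismisses (1)--(3) as immediate.
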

\begin{proof}
Part 1, 2 and 3 follow immediately from the definition of $\zeta_k(\alpha,r,q).$ \\
For the part 4 notice that
\begin{align*}
\sum\limits_{j=0}^{q-1}\zeta_k(\alpha,r+jm,mq)&=\sum\limits_{j=0}^{q-1}\lim_{x \rightarrow \infty}\left( \sum\limits_{\substack{0<n\leq x \\ n \equiv r+jm\bmod mq}} \dfrac{\log^k n}{n^{\alpha}} - \dfrac{I_k(x,\alpha)}{mq}\right).
\end{align*}
Here in the right hand side $n=(r+jm)+n'mq$ where $n' \geq 0$ and $j \in \{0,1,2,\cdots,q-1\}$. This means $n=r+(n'q+j)m$ where $n'q+j$ varies over all non-negative integers. Hence 
$$\sum\limits_{j=0}^{q-1}\zeta_k(\alpha,r+jm,mq)=\lim_{x \rightarrow \infty}\left( \sum\limits_{\substack{0<n\leq x \\ n \equiv r\bmod m}} \dfrac{\log ^k n}{n^{\alpha}} - \dfrac{I_k(x,\alpha)}{m}\right)=\zeta_k(\alpha,r,m).$$ 
\end{proof}
\subsection{Proof of \hyperref[MST]{Theorem \ref{MST}}}
Consider the discrete Fourier transform $f$ defined by 
\begin{equation*}
f(a):= \sum\limits_{b=0}^{q-1}\zeta_k(\alpha,b,q)e^{\frac{2 \pi i ab}{q}}.
\end{equation*}
Now for $a=0$ we have $$f(0)=\sum\limits_{b=0}^{q-1}\zeta_k(\alpha,b,q)=H_k(\alpha).$$ On the other hand 
for $a \neq 0$, we have
\begin{align*}
f(a)&=\sum\limits_{b=0}^{q-1}\lim_{x \rightarrow \infty}\left( H_{k}(x,\alpha,b,q) - \dfrac{I_k(x,\alpha)}{q}\right)e^{\frac{2 \pi i ab}{q}} \\
&=\lim_{x \rightarrow \infty}\left(\sum\limits_{b=0}^{q-1}H_{k}(x,\alpha,b,q)e^{\frac{2 \pi i ab}{q}} - \dfrac{I_k(x,\alpha)}{q}\sum\limits_{b=0}^{q-1}e^{\frac{2 \pi i ab}{q}} \right)\\
    &= \lim_{x \rightarrow \infty}\left(\sum\limits_{b=0}^{q-1}\left(\sum\limits_{\substack{0<n\leq x \\ n \equiv b\bmod q}} \dfrac{\ e^{\frac{2 \pi i an}{q}}\log ^k n}{n^{\alpha}}  \right)\right) \\
    &=\sum\limits_{n=1}^{\infty} \dfrac{e^{\frac{2 \pi i an}{q}}\log ^k n}{n^{\alpha}}.  
\end{align*}
Hence we get 
\begin{equation*}
F^{(k)}\left(\frac{a}{q},\alpha\right)=(-1)^k\sum\limits_{b=0}^{q-1}\zeta_k(\alpha,b,q)e^{\frac{2 \pi i ab}{q}}.
\end{equation*}
Applying Fourier inversion on the above identity gives the desired result.  \qed
\subsection{Proof of \hyperref[MSC]{Corollary \ref{MSC}}}
We know that      
\begin{align*}
\zeta_0(\alpha,b,q)&= \dfrac{H_0(\alpha)}{q}+\sum\limits_{a=1}^{q-1}F\left(\frac{a}{q},\alpha\right)e^{\frac{-2\pi i ab}{q}} \\
&= \dfrac{\zeta(\alpha)}{q}+\sum\limits_{a=1}^{q-1}F\left(\frac{a}{q},\alpha\right)e^{\frac{-2\pi i ab}{q}}.
\end{align*}
Now we shall use the following identity given by Ramanujan in \cite{RMJ}
\begin{equation*}
1=\sum\limits_{t \geq 0}(s-1)_t(\zeta(s+t)-1).
\end{equation*} 
 From this one can deduce the analytic continuation for $\zeta(s)$ 
\begin{equation*}
\zeta(s)=\dfrac{s}{s-1}-\dfrac{1}{s-1}\sum\limits_{t\geq 1}(s-1)_t(\zeta(s+t)-1).
\end{equation*}
For $s=\alpha$ where $\Re(\alpha)>0$ it follows from Ramanujan's identity that 
\begin{align*}
\zeta_0(\alpha,b,q)&=\dfrac{\alpha}{q(\alpha-1)}+\dfrac{1}{q(1-\alpha)}\sum\limits_{t \geq 1}(\alpha-1)_t (\zeta(\alpha+t)-1)+\dfrac{1}{q}\sum\limits_{a=1}^{q-1}\sum\limits_{n=1}^{\infty}\dfrac{e^{\frac{2\pi i na}{q}}}{n^{\alpha}}e^{\frac{-2\pi i a b}{q}} \\
&=\dfrac{\alpha}{q(\alpha-1)}+\dfrac{1}{q(1-\alpha)}\sum\limits_{t \geq 1}(\alpha-1)_t \left( \sum\limits_{n=2}^{\infty} \dfrac{1}{n^{\alpha+t}} \right)+\dfrac{1}{q}\sum\limits_{a=1}^{q-1}\sum\limits_{n=1}^{\infty}\dfrac{e^{\frac{2\pi i na}{q}}}{n^{\alpha}}e^{\frac{-2\pi i a b}{q}} \\
&=\dfrac{\alpha}{q(\alpha-1)}+\dfrac{1}{q(1-\alpha)}\sum\limits_{t \geq 1}  \sum\limits_{n=2}^{\infty} \dfrac{(\alpha-1)_t}{n^{\alpha+t}} +\dfrac{1}{q}\sum\limits_{a=1}^{q-1}\sum\limits_{n=1}^{\infty}\dfrac{e^{\frac{2\pi i a(n-b)}{q}}}{n^{\alpha}}. \\
\end{align*}
This completes the proof. \qed 

For a general $k$, since we have analytic continuation of the periodic zeta function $F(x,s)$ we can deduce the following translational formula.
\begin{proposition}
For $l,b,q \in \mathbb{N}$ where $b \leq q$ the function $\zeta_k(\alpha,b,q)$ obtained by meromorphic continuation of the periodic zeta 
function $F(x,s)$ satisfies the following recursive equation 
\begin{equation*}
\zeta_{k+l}(\alpha,b,q)=\dfrac{H_{k+l}(\alpha)}{q}+(-1)^l\left( \zeta_{k}^{(l)}(\alpha,b,q)-\dfrac{H_{k}^{l}(\alpha)}{q} \right)
\end{equation*}
\begin{proof}
Take derivative of equation \eqref{DPZ} $l$ times and compare it with \eqref{DPZ}.
\end{proof}
\end{proposition}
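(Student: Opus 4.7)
The plan is to exploit the Fourier representation~\eqref{DPZ} from Theorem~\ref{MST}, which expresses $\zeta_k(\alpha,b,q)$ in terms of the derivatives $F^{(k)}(a/q,\alpha)$ of the periodic zeta function. Because $a/q\notin\mathbb{Z}$ for each $a\in\{1,\dots,q-1\}$, the map $s\mapsto F(a/q,s)$ is entire, so the right-hand side of~\eqref{DPZ} is smooth in $\alpha$ and differentiation term by term under the finite sum in $a$ is immediate.

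First, I would differentiate both sides of~\eqref{DPZ} a total of $l$ times with respect to $\alpha$. Each $F^{(k)}(a/q,\alpha)$ becomes $F^{(k+l)}(a/q,\alpha)$ (derivatives always being taken in the second argument), producing
\[
\zeta_k^{(l)}(\alpha,b,q)=\frac{H_k^{(l)}(\alpha)}{q}+\frac{(-1)^k}{q}\sum_{a=1}^{q-1}F^{(k+l)}\!\left(\frac{a}{q},\alpha\right)e^{-2\pi iab/q}.
\]
Next, I would write down \eqref{DPZ} with $k$ replaced by $k+l$:
\[
\zeta_{k+l}(\alpha,b,q)=\frac{H_{k+l}(\alpha)}{q}+\frac{(-1)^{k+l}}{q}\sum_{a=1}^{q-1}F^{(k+l)}\!\left(\frac{a}{q},\alpha\right)e^{-2\pi iab/q}.
\]

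Both displays contain the same trigonometric sum $S(\alpha):=q^{-1}\sum_{a=1}^{q-1}F^{(k+l)}(a/q,\alpha)e^{-2\pi iab/q}$. Solving each equation for $S(\alpha)$ and equating the results gives
\[
(-1)^{k}\!\left(\zeta_k^{(l)}(\alpha,b,q)-\frac{H_k^{(l)}(\alpha)}{q}\right)=(-1)^{k+l}\!\left(\zeta_{k+l}(\alpha,b,q)-\frac{H_{k+l}(\alpha)}{q}\right),
\]
and multiplying through by $(-1)^{k+l}$, together with the identity $(-1)^{-l}=(-1)^l$, rearranges this to the claimed recursive formula.

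There is no genuine obstacle here; the argument is purely formal once~\eqref{DPZ} and the analyticity of $F(a/q,s)$ in $s$ are in hand. The only thing worth flagging is a notational point: the symbol $H_k^{l}(\alpha)$ in the statement must be interpreted as the $l$-th derivative $H_k^{(l)}(\alpha)$ of the function $H_k$ defined in Theorem~\ref{MST}; under this reading the proof reduces to the one-line comparison outlined above.
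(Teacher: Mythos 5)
Your proof is correct and is exactly the paper's argument: differentiate \eqref{DPZ} $l$ times, compare with \eqref{DPZ} at index $k+l$, and eliminate the common Fourier sum. The paper states this in one line; your write-up merely fills in the algebra (and correctly reads $H_k^{l}$ as the $l$-th derivative $H_k^{(l)}$).
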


\subsection{Proof of \hyperref[TSF]{Theorem \ref{TSF} }} 
Denote the partial sum \begin{equation*}
A(x):=\sum\limits_{\substack{n \leq x \\ n \equiv r \bmod q}}1=\left \lfloor \dfrac{x-r}{q} \right \rfloor.
\end{equation*}
Then for $\Re(s)>1$, by the use of Riemann Stieltjes integral we have
\begin{align*}
\zeta(s;r,q)&=\int\limits_{1}^{\infty}\dfrac{dA(t)}{t^s}=\dfrac{1}{q}\int\limits_{1}^{\infty}\dfrac{1}{t^s}+\int\limits_{1}^{\infty}\dfrac{1}{t^s}d\left(\left \lfloor \dfrac{t-r}{q} \right \rfloor - \left(\dfrac{t-r}{q} \right)\right) \\
&=\dfrac{1}{q(s-1)}+ \int\limits_{1}^{\infty}\dfrac{1}{t^s}d\left(\left \lfloor \dfrac{t-r}{q} \right \rfloor - \left(\dfrac{t-r}{q} \right)\right). \\
\end{align*}
The above sum is an analytic function in the region $\Re(s)>0$ except for a simple pole at $s=1.$ Hence we have the analytic continuation of $\zeta(s;r,q)$ to this region. Now expand both the function $\frac{1}{s-1}$ and $\frac{1}{t^s}$ as a Taylor series around the point $s=\alpha.$ We get
\begin{align*}
\zeta(s;r,q)&=\dfrac{1}{q}\left(\sum\limits_{m=0}^{\infty}\dfrac{(-1)^m}{(\alpha-1)^{m+1}}(s-\alpha)^m\right) \\
& \quad + \int\limits_{1}^{\infty}\left(\dfrac{(-1)^{m}}{m!}\dfrac{\log^m t}{t^\alpha}(s-\alpha)^m \right)d\left(\left \lfloor \dfrac{t-r}{q} \right \rfloor - \left(\dfrac{t-r}{q} \right)\right) \\
&=\dfrac{1}{q}\left(\sum\limits_{m=0}^{\infty}\dfrac{(-1)^m}{(\alpha-1)^{m+1}}(s-\alpha)^m\right) \\
 & \quad + \sum\limits_{m=0}^{\infty}\dfrac{(-1)^{m}}{m!}(s-\alpha)^{m}\lim_{N \rightarrow \infty}\left( \sum\limits_{\substack{n \leq N \\ n \equiv r \bmod q}}\dfrac{\log^m n}{n^\alpha}-\dfrac{1}{q}\int_{1}^{N}\dfrac{\log^m t}{t^{\alpha}}dt \right).
\end{align*}
Using \hyperref[MLET]{Proposition \ref{MLET}} we get
\begin{align*}
\zeta(s;r,q)&=\dfrac{1}{q}\left(\sum\limits_{m=0}^{\infty}\dfrac{(-1)^m}{(\alpha-1)^{m+1}}(s-\alpha)^m\right)\\
& \quad +\sum\limits_{m=0}^{\infty}\dfrac{(-1)^{m}}{m!}(s-\alpha)^{m}\left(\zeta_m(\alpha,r,q)+ \dfrac{(-1)^m m!}{q(1-\alpha)^{m+1}}\right).
\end{align*}
Simplifying the above sum gives the desired result. \qed
\\ \\
\subsection{Proof of \hyperref[ETT]{Theorem \ref{ETT} }}
For $\Re(s)>1$ we have 
\begin{equation} \label{IEFD}
\zeta(s;r_1,q_1)\zeta(s;r_2,q_2)=L(s,d_{*})=\sum\limits_{n=1}^{\infty}\dfrac{d_{*}(n)}{n^s}=\int\limits_{1^{-}}^{\infty}x^{-s}dA(x)
\end{equation}
where $A(x)$ is given by
\begin{equation} \label{Ax}
A(x)=\sideset{}{'}\sum\limits_{n \leq x}d_{*}(n)=\Delta_{*}(x;r_1,q_1,r_2,q_2)+xP_{k-1}(\log x,r_1,q_1,r_2,q_2).
\end{equation}
By the use of Perron's inversion formula \hyperlink{cite.ARZ}{\cite[Appendix A.3]{ARZ}} the main term can be written as
 \begin{equation*}
 xP_{2}(\log x,r_1,q_1,r_2,q_2)=\text{Res}_{s=1}\left(\zeta(s;r_1,q_1)\zeta(s;r_2,q_2)\dfrac{x^s}{s}\right).
 \end{equation*} 
Further with the help of the Laurent series expansion \eqref{LSH} of $\zeta(s;r,q)$ at $s=1$ we get
\begin{equation*} 
P_{2}(\log x,r_1,q_1,r_2,q_2)=\dfrac{\log x}{q_1q_2}+\dfrac{\gamma_0(r_1,q_1)}{q_2}+\dfrac{\gamma_0(r_2,q_2)}{q_1}-\dfrac{1}{q_1q_2}.
\end{equation*}
Using \eqref{Ax} we can write down the integral in \eqref{IEFD} as the sum of $S_1(s)$ and $S_2(s)$ where
\begin{align*}
S_1(s)&=\int\limits_{1^{-}}^{\infty}x^{-s}d(xP_{2}(\log x,r_1,q_1,r_2,q_2)) \\
&=\dfrac{1}{q_1q_2}\left(\dfrac{1}{(s-1)^2}\right)+\left(\dfrac{\gamma_0(r_1,q_1)}{q_2}+\dfrac{\gamma_0(r_2,q_2)}{q_1}\right)\left(\dfrac{1}{s-1}\right).
\end{align*}
and 
\begin{align*}
S_2(s)&=\int\limits_{1^{-}}^{\infty}x^{-s}d\Delta_{*}(x;r_1,q_1,r_2,q_2)=-\Delta_{*}(1)+s\int\limits_{1}^{\infty}\dfrac{\Delta_{*}(x;r_1,q_1,r_2,q_2)}{x^{s+1}}dx.
\end{align*}
It follows from \eqref{BFDE} and the above expressions for $S_1(s)$ and $S_2(s)$ that the Dirichlet L-series $L(s,d_{*})$ can be analytically continued to the half plane $\Re(s)>\frac{131}{416}$ except for the pole of order 2 at $s=1.$ Using the Taylor series expansion of the function $S_2(s)$ about the point $s=1$ one can deduce the Laurent series expansion for $L(s,d_{*})$ at $s=1$ as follows
\begin{align*} 
L(s,d_{*})&=\dfrac{1}{q_1q_2}\left(\dfrac{1}{(s-1)^2}\right)+\left(\dfrac{\gamma_0(r_1,q_1)}{q_2}+\dfrac{\gamma_0(r_2,q_2)}{q_1}\right)\left(\dfrac{1}{s-1}\right) \\
& \quad + \sum\limits_{k=0}^{\infty}\left(\dfrac{(-1)^k}{k!}\int\limits_{1^{-}}^{\infty}\dfrac{\log ^k x}{x}d\Delta_{*}(x,r_1,q_1,r_2,q_2)\right)(s-1)^{k}.
\end{align*}
To obtain \eqref{IEE} simply equate the coefficient of $(s-1)^k$ in the above Laurent series expansion of $L(s,d_{*})$ to the coefficient of $(s-1)^k$ in the expansion of the product $\zeta(s;r_1,q_1)\zeta(s;r_2,q_2)$ by using \eqref{LSH}. 
Similarly to get \eqref{IEE2} expand the functions $S_1(s)$ and $S_2(s)$ around the point $s=\alpha$ to get a Laurent series expansion for $L(s,d_{*})$ at $s=\alpha$ and equate it to the expansion coming from \hyperref[TSF]{Theorem \ref{TSF}}. This completes the proof. \qed
\section{A generalization of Euler-Stieltjes constant} \label{GOEC}
 The Laurent series expansion \eqref{LSEFP} when $f$ is a Dirichlet character $\chi$ modulo $q$ becomes 
\begin{equation*}
L(s,\chi)=\dfrac{\gamma_{-1}(\chi)}{s-1}+\sum\limits_{k=1}^{\infty}\dfrac{(-1)^{k}\gamma_k(\chi)}{k!}(s-1)^k
\end{equation*}
where $\gamma_k(\chi)=\sum\limits_{r=1}^{q}\chi(r)\gamma_{k}(r,q)$\cite{KNO} for $k \geq 0$ and $\gamma_{-1}(\chi)=\frac{\phi(q)}{q}$ if $\chi$ is principal character $\chi_0$ and 0 otherwise. For a non principal character $\chi$, closed form expressions and explicit upper bounds for the constants $\gamma_k(\chi)$ have been studied by many authors \bb[\citenum{DCH},\citenum{GMAX},\citenum{ISHI},\citenum{KAN},\citenum{SUM},\citenum{TMA}]. For the principal character $\chi_0$ modulo $q$ the expression for the constant $\gamma_{0}(\chi_0)$ was given by Redmond in 1982 \hyperlink{cite.RDN1}{\cite[Lemma 4]{RDN1}} (see also \cite{RDN2}). The general expression for $\gamma_k(\chi_0)$ was given by Shirasaka \hyperlink{cite.SHI}{\cite[Proposition 7]{SHI}}. He proved that 
\begin{equation} \label{SHIC}
\gamma_k(\chi_0)=\sum\limits_{\substack{r=1 \\ (r,q)=1}}^{q}\gamma_k(r,q)
\end{equation} 
and 
\begin{equation}\label{SHIC2}
\gamma_k(\chi_0)=\sum\limits_{j=0}^{k}\dbinom{k}{j}\gamma_jN_{k-j}(q)-\dfrac{1}{k+1}N_{k+1}(q)
\end{equation}
where $$N_{k}(q):=q\sum\limits_{d|q}\frac{\mu(d)\log^kd}{d}.$$
  Observing from the definition \eqref{GELC} of $\gamma_k(r,q)$  and \eqref{SHIC} we state an aysmptotic representation of $\gamma_k(\chi_0)$ as follows. 
\begin{proposition}
For $k \geq 0$ and the Dirichlet character $\chi_0$ modulo $q$, the Laurent series expansion at $s=1$ is  given by
\begin{equation*}
L(s,\chi_0)=\dfrac{\phi(q)}{q(s-1)}+\sum\limits_{k=0}^{\infty}\dfrac{(-1)^k \gamma_k(\chi_{0})}{k!}(s-1)^k
\end{equation*}
where
\begin{equation} \label{LSCR}
\gamma_k(\chi_{0})=\lim_{x\rightarrow \infty} \left( \sum\limits_{\substack{n \leq x \\ (n, \mathrm{rad}(q))=1}}\dfrac{\log^k n}{n} - \prod\limits_{p|\mathrm{rad}(q)}\left(1-\dfrac{1}{p}\right)\dfrac{\log^{k+1} x}{k+1}\right)
\end{equation}
and $\mathrm{rad}(n)$ denotes the radical of $n$.
\end{proposition}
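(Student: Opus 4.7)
The plan is to derive the asymptotic formula \eqref{LSCR} as a direct consequence of Shirasaka's identity \eqref{SHIC} combined with the definition \eqref{GELC} of $\gamma_k(r,q)$. The Laurent expansion of $L(s,\chi_0)$ itself is not new: it is the specialization of the Ishibashi--Kanemitsu expansion \eqref{LSEFP} to $f=\chi_0$, together with the already recorded value $\gamma_{-1}(\chi_0)=\phi(q)/q$. So the content lies entirely in rewriting $\gamma_k(\chi_0)$ as a single limit.

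First, I would take \eqref{SHIC} and substitute in the definition \eqref{GELC} for each $\gamma_k(r,q)$ on the right-hand side. Since the limits on the right are all finite, I can interchange the finite sum over $r$ with the limit, obtaining
\begin{equation*}
\gamma_k(\chi_0)=\lim_{x\to\infty}\left(\sum_{\substack{r=1\\(r,q)=1}}^{q}\sum_{\substack{n\leq x\\ n\equiv r\bmod q}}\frac{\log^k n}{n}-\phi(q)\,\frac{\log^{k+1}x}{q(k+1)}\right).
\end{equation*}
Collecting the inner sums, the double sum is precisely $\sum_{n\leq x,\,(n,q)=1}\log^k n/n$, since every $n\leq x$ with $\gcd(n,q)=1$ contributes exactly once for $r:=n\bmod q$.

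Second, I would invoke two elementary observations to put the formula into the stated form. The coprimality condition $(n,q)=1$ is equivalent to $(n,\mathrm{rad}(q))=1$, because a prime divides $q$ if and only if it divides $\mathrm{rad}(q)$; this lets me replace the index set. The prefactor is handled by Euler's product formula: $\phi(q)/q=\prod_{p\mid q}(1-1/p)=\prod_{p\mid \mathrm{rad}(q)}(1-1/p)$. Substituting both reformulations gives \eqref{LSCR} exactly. There is no genuine obstacle here; the proposition is essentially a repackaging of \eqref{SHIC}, and the only thing to check carefully is the interchange of the finite sum with the limit, which is immediate because each individual limit exists by Proposition~\ref{MLET} (or directly by \eqref{GELC}).
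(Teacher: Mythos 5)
Your proposal is correct and follows exactly the route the paper intends: the paper gives no separate proof, merely noting that the proposition follows ``observing from the definition \eqref{GELC} of $\gamma_k(r,q)$ and \eqref{SHIC}'', which is precisely your substitution of \eqref{GELC} into \eqref{SHIC}, interchange of the finite sum with the limit, and the identifications $(n,q)=1\Leftrightarrow(n,\mathrm{rad}(q))=1$ and $\phi(q)/q=\prod_{p\mid \mathrm{rad}(q)}(1-1/p)$.
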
 
 In the case when $k=0$ and $q=1$, the expression \eqref{LSCR} reduces to the asymptotic representation \eqref{SRG} of $\gamma_k$ which was first discovered by Stieltjes and then later by many other authors \hyperlink{cite.BLI}{\cite[p. 538]{BLI}}. In 2008, Diamond and Ford \cite{DHF} considered the constants $\gamma(\wp)$ associated to a finite set of primes $\wp$ as follows:
 \begin{equation*}
 \gamma(\wp):=\lim_{x \rightarrow \infty}\left( \sum_{\substack{n \leq x \\ (n,\prod\limits_{p \in \wp}p)=1}}\dfrac{1}{n}-\delta_\wp \log x \right)
 \end{equation*}
where $\delta_{\wp}=\prod\limits_{p \in \wp}\left(1-\dfrac{1}{p} \right).$ It is easy to see that $\gamma(\emptyset)=\gamma.$ A simple observation shows that 
\begin{equation*} 
\gamma(\wp)=\lim_{s \rightarrow 1}\left[\zeta(s)\prod\limits_{p \in \wp}\left(1-\dfrac{1}{p^s}\right)-\dfrac{\delta_{\wp}}{s-1}  \right]=\gamma_0(\chi_0)
\end{equation*} 
where $\chi_0$ is the principal Dirichlet character modulo $q=\prod\limits_{p \in \wp}p.$ Hence Proposition 1 in \cite{DHF} and Lemma 2 in \cite{MZ} are a restatement of Lemma 4 in \cite{RDN1}. However, the constants $\gamma(\wp)$ are of special importance when $\wp=\mathcal{P}_r$ where $\mathcal{P}_r$ is the set of first $r$ primes. This is mainly because Diamond and Ford \hyperlink{cite.DHF}{\cite[Corollary 1]{DHF}} proved that the Riemann Hypothesis is true if and only if $``\gamma(\mathcal{P}_r) > e^{- \gamma}$ for all $r \geq 0".$  Here we consider the behaviour of the constants $\gamma_k(\mathcal{P}_r):=\gamma_k(\chi_0)$ where $\chi_0$ is the principal Dirichlet character modulo $q=\prod_{p \in \mathcal{P}_r} p.$ For further discussion we need an expression for $\gamma_{k}(\mathcal{P}_r)$ in terms of generalized von Mangoldt function $\Lambda_{m}:=\sum\limits_{d|n}\mu(d)\log^k\left(\frac{n}{d}\right)$.
\begin{proposition} \label{CFEG}
For a finite set of primes $S$ denote $P_{S}=\prod\limits_{p \in S}p$. Then we have
$$\gamma_{k}(\mathcal{P}_r)=\prod\limits_{p|P_r}\left(1-\dfrac{1}{p} \right)\left(\sum\limits_{i=0}^{k}\left(\sum\limits_{S \subseteq \mathcal{P}_r}\dfrac{L_{k-i}(P_{S})}{\prod\limits_{p|P_S}(p-1)}\right)\dbinom{k}{i}\gamma_{i} -\dfrac{1}{k+1}\sum\limits_{S \subseteq \mathcal{P}_r}\dfrac{L_{k+1}(P_{S})}{\prod\limits_{p|P_S}(p-1)}\right).$$
where
$$L_k:=\sum\limits_{m=0}^{k}\dbinom{k}{m}(-1)^{m}\log^{k-m}\Lambda_{m}.$$ and $P_r:=\prod\limits_{p \in \mathcal{P}_r}p.$
\end{proposition}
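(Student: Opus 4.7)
The plan is to reduce the proposition to Shirasaka's identity \eqref{SHIC2} applied to $q = P_r$, which expresses $\gamma_k(\mathcal{P}_r)$ as a linear combination of the divisor sums $N_j(P_r)$ for $0 \leq j \leq k+1$. The heart of the argument is then to show that each $N_j(P_r)$ can be rewritten as
\[
N_j(P_r) = \phi(P_r) \sum_{S \subseteq \mathcal{P}_r}\frac{L_j(P_S)}{\prod_{p|P_S}(p-1)},
\]
so that the ubiquitous factor $\phi(P_r)/P_r = \prod_{p|P_r}(1 - 1/p)$ factors out to the front of the formula as stated.

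The first key step is to obtain a closed form for $L_j(n)$. Using $\Lambda_m(n) = \sum_{d|n}\mu(d)\log^m(n/d)$, I would swap the order of summation over $m$ and over divisors $d$ of $n$ in the definition of $L_j(n)$. Inside the sum over $d$, the binomial theorem collapses
\[
\sum_{m=0}^j \binom{j}{m}(\log n)^{j-m}(-\log(n/d))^m = \bigl(\log n - \log(n/d)\bigr)^j = (\log d)^j,
\]
leaving $L_j(n) = \sum_{d|n}\mu(d)\log^j d$. Since $P_S$ is squarefree with divisors $P_T$ for $T \subseteq S$ and $\mu(P_T) = (-1)^{|T|}$, this specializes to $L_j(P_S) = \sum_{T \subseteq S}(-1)^{|T|}\log^j P_T$.

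Second, I would evaluate $\Sigma_j := \sum_{S \subseteq \mathcal{P}_r} L_j(P_S)/\prod_{p|P_S}(p-1)$ by interchanging the order of summation between $S$ and its subsets $T$. The resulting inner sum factorizes over primes of $\mathcal{P}_r$ as
\[
\sum_{S \supseteq T}\frac{1}{\prod_{p \in S}(p-1)} = \frac{1}{\prod_{p \in T}(p-1)}\prod_{p \in \mathcal{P}_r \setminus T}\frac{p}{p-1} = \frac{P_r}{P_T\,\phi(P_r)},
\]
yielding $\Sigma_j = \tfrac{P_r}{\phi(P_r)}\sum_T (-1)^{|T|}\log^j P_T/P_T = N_j(P_r)/\phi(P_r)$ upon comparison with the definition of $N_j(P_r)$ specialized to $q = P_r$. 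Substituting back into Shirasaka's formula and collecting the common factor $\phi(P_r)/P_r$ produces the stated identity.

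The main obstacle is the combinatorial bookkeeping: spotting the closed form $L_j(n) = \sum_{d|n}\mu(d)\log^j d$ concealed in the definition of $L_j$, and then carefully swapping the nested subset sums and factoring the resulting product over $\mathcal{P}_r \setminus T$. Once these two ingredients are in place, every other step is a routine substitution.
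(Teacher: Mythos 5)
Your proposal is correct and takes essentially the same route as the paper: both reduce to Shirasaka's identity \eqref{SHIC2}, identify $L_j(n)=\sum_{d\mid n}\mu(d)\log^j d$ via the binomial theorem, and convert $N_j(P_r)$ into the subset sum $\sum_{S\subseteq\mathcal{P}_r}L_j(P_S)/\prod_{p\mid P_S}(p-1)$ --- you by directly interchanging the sums over $S$ and $T$, the paper by M\"obius inversion, which is the same computation read in the opposite direction. The one caveat, shared with the paper's own proof, is that the final substitution tacitly uses \eqref{SHIC2} with $N_j(q)/q$ in place of $N_j(q)$: with the printed normalization, your (correct) identity $N_j(P_r)=\phi(P_r)\sum_{S\subseteq\mathcal{P}_r}L_j(P_S)/\prod_{p\mid P_S}(p-1)$ would produce the prefactor $\phi(P_r)$ rather than $\prod_{p\mid P_r}\left(1-\frac{1}{p}\right)$.
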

\begin{proof}
Firstly observe that
\begin{align*}
\sum\limits_{d|n}\mu(d)\log^k(d)&=\sum\limits_{d|n}\mu(d)\left(\log(n)-\log\left(\frac{n}{d}\right)\right)^k \\
&=\sum\limits_{m=0}^{k}\dbinom{k}{m}(-1)^{m}\log^{k-m}(n)\Lambda_{m}(n). \\
\end{align*}
With the help of Mobius inversion we can express $\frac{N_{k}(P_r)}{q}$ in the following way:
\begin{align*}
 \dfrac{N_{k}(P_r)}{q}&=\sum\limits_{d|P_r}\dfrac{\mu(d)\log^k(d)}{d} =\sum\limits_{d|P_r}\dfrac{\sum\limits_{d_1|d}L_k(d_1)\mu\left(\dfrac{d}{d_1}\right)}{d} =\sum\limits_{d_1d_2|P_r}\dfrac{L_k(d_1)\mu(d_2)}{d_1d_2} \\
 &=\sum\limits_{d_1|P_r}\dfrac{L_k(d_1)}{d_1}\sum\limits_{d_2|\frac{P_r}{d_1}}\dfrac{\mu(d_2)}{d_2}=\sum\limits_{d_1|P_r}\dfrac{L_k(d_1)}{d_1}\sum\limits_{d_2|\frac{P_r}{d_1}}\dfrac{\mu(d_2)}{d_2}\\
&= \sum\limits_{S \subseteq \mathcal{P}_r}\dfrac{L_k(P_{S})}{P_S}\prod\limits_{p|\frac{P_r}{P_S}}\left(1-\dfrac{1}{p} \right)\prod_{p|P_S}\left(1-\dfrac{1}{p} \right)\prod_{p|P_S}\left(1-\dfrac{1}{p} \right)^{-1} \\
&=\sum\limits_{S \subseteq \mathcal{P}_r}\dfrac{L_k(P_{S})}{P_S}\prod\limits_{p|P_r}\left(1-\dfrac{1}{p} \right)\prod_{p|P_S}\left(1-\dfrac{1}{p} \right)^{-1} \\
&=\prod\limits_{p|P_r}\left(1-\dfrac{1}{p} \right)\sum\limits_{S \subseteq \mathcal{P}_r}\dfrac{L_k(P_{S})}{\prod\limits_{p|P_S}(p-1)}.
\end{align*}
Now use the above expression along with \eqref{SHIC2} to complete the proof.
\end{proof}

  For $r=0$ it is already known from a result of Briggs \hyperlink{cite.WB}{\cite[Theorem 1]{WB}} that for $k \geq 0$, the constant $\gamma_k(\mathcal{P}_0)$ changes sign infinitely often. This was further improved by Mitrovi\'c \hyperlink{cite.MDR}{\cite[Theorem 4]{MDR}}. On the other hand, for a general $r \geq 0$ and $k=0$ it was shown  \hyperlink{cite.DHF}{\cite[Theorem 2]{DHF}} that there are infinitely many integers $r$ for which $\gamma_0(\mathcal{P}_{r+1})>\gamma_0(\mathcal{P}_{r})$ and infinitely many integers $r$ for which $\gamma_0(\mathcal{P}_{r+1})<\gamma_0(\mathcal{P}_{r}).$
More generally using \hyperref[CFEG]{Proposition \ref{CFEG}} one can give a relation between the  two consecutive constants $\gamma_k(\mathcal{P}_{r})$ and $\gamma_k(\mathcal{P}_{r+1})$ as follows.
\begin{corollary}
We have
\begin{equation} \label{IIDF}
\gamma_{k}(\mathcal{P}_{r+1})=\gamma_{k}(\mathcal{P}_{r})\left[\left(1-\dfrac{1}{p_{r+1}}\right)\dfrac{G_k(p_{r+1})}{G_k(p_r)} \right]
\end{equation}
where $G_k$ is the function given by
\begin{equation*}
G_k(x):=\sum\limits_{i=0}^{k}\left(\sum\limits_{S \subseteq \mathcal{P}_{\pi(x)}}\dfrac{L_{k-i}(P_{S})}{\prod\limits_{p|P_S}(p-1)}\right)\dbinom{k}{i}\gamma_{i} -\dfrac{1}{k+1}\sum\limits_{S \subseteq \mathcal{P}_{\pi(x)}}\dfrac{L_{k+1}(P_{S})}{\prod\limits_{p|P_S}(p-1)}
\end{equation*}
and $\pi(x)$ is the prime counting function.
\end{corollary}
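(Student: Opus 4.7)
The statement is an almost immediate consequence of Proposition \ref{CFEG}, so the plan is essentially to unfold the definitions and factor.

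First, I would rewrite Proposition \ref{CFEG} in the compact form
\begin{equation*}
\gamma_k(\mathcal{P}_r) \;=\; \prod_{p \mid P_r}\!\left(1-\tfrac{1}{p}\right) G_k(p_r),
\end{equation*}
using that $\pi(p_r)=r$ so the inner sums $\sum_{S \subseteq \mathcal{P}_{\pi(p_r)}}$ appearing in $G_k(p_r)$ are exactly the sums $\sum_{S \subseteq \mathcal{P}_r}$ on the right-hand side of Proposition \ref{CFEG}. In other words, $G_k$ is precisely the bracketed expression in Proposition \ref{CFEG} regarded as a function of the largest prime allowed.

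Next, I would apply the same identity with $r$ replaced by $r+1$, obtaining
\begin{equation*}
\gamma_k(\mathcal{P}_{r+1}) \;=\; \prod_{p \mid P_{r+1}}\!\left(1-\tfrac{1}{p}\right) G_k(p_{r+1}).
\end{equation*}
Since $P_{r+1}=P_r\cdot p_{r+1}$ and $p_{r+1}\nmid P_r$, the product over $p\mid P_{r+1}$ splits as $\prod_{p\mid P_r}\!\left(1-\tfrac{1}{p}\right)\left(1-\tfrac{1}{p_{r+1}}\right)$. Substituting this and factoring out the common $\prod_{p\mid P_r}\!\left(1-\tfrac{1}{p}\right)$, then using the $r$-case identity to recognise that factor times $G_k(p_r)$ as $\gamma_k(\mathcal{P}_r)$, gives
\begin{equation*}
\gamma_k(\mathcal{P}_{r+1}) \;=\; \gamma_k(\mathcal{P}_r)\left[\left(1-\tfrac{1}{p_{r+1}}\right)\frac{G_k(p_{r+1})}{G_k(p_r)}\right],
\end{equation*}
which is \eqref{IIDF}.

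There is essentially no analytic obstacle; the only point requiring a brief comment is that $G_k(p_r)$ appears in a denominator, so one should remark that the division is formal, or equivalently restate the identity in the cleared form $G_k(p_r)\gamma_k(\mathcal{P}_{r+1})=\gamma_k(\mathcal{P}_r)(1-1/p_{r+1})G_k(p_{r+1})$, which holds unconditionally. The bookkeeping step (verifying that the subset sums over $\mathcal{P}_{\pi(p_r)}$ genuinely reproduce Proposition \ref{CFEG}) is the only place to be a little careful, but it is purely notational.
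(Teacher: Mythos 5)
Your proof is correct and follows essentially the same route as the paper: both apply Proposition \ref{CFEG} at $r$ and at $r+1$, split $\prod_{p\mid P_{r+1}}(1-1/p)$ off the factor $(1-1/p_{r+1})$, and identify the bracketed expression with $G_k(p_{r+1})$ (the paper merely makes the identification explicit by decomposing the subset sums over $\mathcal{P}_{r+1}$ into those with and without $p_{r+1}$, a step you rightly observe is purely notational). Your remark about the division by $G_k(p_r)$ is a reasonable extra precaution that the paper does not bother with.
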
  
\begin{proof}
From \hyperref[CFEG]{Proposition \ref{CFEG}} we have
\begin{equation*}
\begin{aligned}
&\gamma_{k}(\mathcal{P}_{r+1})=\prod\limits_{p|P_{r+1}}\left(1-\dfrac{1}{p} \right)\left(\sum\limits_{i=0}^{k}\left(\sum\limits_{S \subseteq \mathcal{P}_{r+1}}\dfrac{L_{k-i}(P_{S})}{\prod\limits_{p|P_S}(p-1)}\right)\dbinom{k}{i}\gamma_{i} -\dfrac{1}{k+1}\sum\limits_{S \subseteq \mathcal{P}_{r+1}}\dfrac{L_{k+1}(P_{S})}{\prod\limits_{p|P_S}(p-1)}\right) \\
&=\prod\limits_{p|P_{r}}\left(1-\dfrac{1}{p} \right)\left(1-\dfrac{1}{p_{r+1}} \right)\Biggl(\sum\limits_{i=0}^{k}\left[\sum\limits_{S \subseteq \mathcal{P}_{r}}\dfrac{L_{k-i}(P_{S})}{\prod\limits_{p|P_S}(p-1)}+\dfrac{1}{p_{r+1}-1}\sum\limits_{S \subseteq \mathcal{P}_{r}}\dfrac{L_{k-i}(p_{r+1}P_{S})}{\prod\limits_{p|P_S}(p-1)}\right]\dbinom{k}{i}\gamma_{i}\\
&\qquad -\dfrac{1}{k+1}\sum\limits_{S \subseteq \mathcal{P}_{r}}\dfrac{L_{k+1}(P_{S})}{\prod\limits_{p|P_S}(p-1)}-\dfrac{1}{(k+1)(p_{r+1}-1)}\sum\limits_{S \subseteq \mathcal{P}_{r}}\dfrac{L_{k+1}(p_{r+1}P_{S})}{\prod\limits_{p|P_S}(p-1)}\Biggr) \\
&=\gamma_{k}(\mathcal{P}_{r})\left(\left(1-\dfrac{1}{p_{r+1}}\right)\dfrac{G_k(p_{r+1})}{G_k(p_r)} \right).
\end{aligned}
\end{equation*}
\end{proof}  
The case when $k=0$, the function $A(x)$ \cite[Equation 3.1]{DHF} coincides with $G_0(x)$ and the equation 3.2 in \cite{DHF} follows from the identity \eqref{IIDF}. 

As a continuation, we study the behaviour of $\gamma_k(\mathcal{P}_r)$ when $k=1$ and obtain the following result.
\begin{theorem} \label{MSIT} 
$|\gamma_1(\mathcal{P}_{r+1})|>|\gamma_1(\mathcal{P}_{r})|$ for all but finitely many $r.$ 
\end{theorem}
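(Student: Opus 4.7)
The plan is to invoke the recursion from the preceding corollary:
$$\gamma_{1}(\mathcal{P}_{r+1})=\gamma_{1}(\mathcal{P}_{r})\left(1-\dfrac{1}{p_{r+1}}\right)\dfrac{G_{1}(p_{r+1})}{G_{1}(p_{r})}.$$
Thus $|\gamma_{1}(\mathcal{P}_{r+1})|>|\gamma_{1}(\mathcal{P}_{r})|$ is equivalent to $(p_{r+1}-1)|G_{1}(p_{r+1})|>p_{r+1}|G_{1}(p_{r})|$, and the whole problem reduces to analysing $G_{1}$ along the primes.

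First I would put $G_{1}$ in a workable closed form. The generating identity $\sum_{d\mid P_{S}}\mu(d)d^{-s}=\prod_{p\in S}(1-p^{-s})$ shows that $L_{k}(P_{S})=0$ for $|S|>k$, while a direct evaluation gives $L_{0}(1)=1$, $L_{1}(p)=-\log p$, $L_{2}(p)=-\log^{2}p$ and $L_{2}(pq)=2\log p\log q$. Substituting these into the definition of $G_{1}$ yields
$$G_{1}(x)=\gamma_{1}-\gamma\, T_{1}(x)+\tfrac{1}{2}T_{2}(x)-\tfrac{1}{2}T_{1}(x)^{2}+\tfrac{1}{2}U_{2}(x),$$
where $T_{k}(x):=\sum_{p\leq x}\log^{k}p/(p-1)$ and $U_{2}(x):=\sum_{p\leq x}\log^{2}p/(p-1)^{2}$. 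Mertens' theorem gives $T_{1}(x)=\log x+O(1)$, partial summation from the prime number theorem yields $T_{2}(x)=\tfrac{1}{2}\log^{2}x+O(\log x)$, and $U_{2}(x)=O(1)$. Substituting, one obtains $G_{1}(x)=-\tfrac{1}{4}\log^{2}x+O(\log x)$; in particular $G_{1}(p_{r})\to-\infty$, so $G_{1}(p_{r})\ne 0$ and $\gamma_{1}(\mathcal{P}_{r})\ne 0$ for all sufficiently large $r$, and $|G_{1}(p_{r})|=-G_{1}(p_{r})$ throughout the range that interests us.

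Next I would compute the increment directly from the closed form. Using $T_{1}(p_{r+1})-T_{1}(p_{r})=\log p_{r+1}/(p_{r+1}-1)$, the $U_{2}$ contribution cancels exactly with part of $T_{1}(p_{r+1})^{2}-T_{1}(p_{r})^{2}$, leaving
$$G_{1}(p_{r})-G_{1}(p_{r+1})=\dfrac{\log p_{r+1}}{p_{r+1}-1}\left(T_{1}(p_{r})+\gamma-\tfrac{1}{2}\log p_{r+1}\right).$$
Rearranging, the target estimate $(p_{r+1}-1)|G_{1}(p_{r+1})|>p_{r+1}|G_{1}(p_{r})|$ is equivalent to
$$G_{1}(p_{r})-G_{1}(p_{r+1})>\dfrac{|G_{1}(p_{r+1})|}{p_{r+1}}.$$

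Finally I would combine the two asymptotic ingredients. From $T_{1}(p_{r})=\log p_{r}+O(1)$ and the elementary consequence $\log p_{r+1}-\log p_{r}\to 0$ of $p_{r+1}/p_{r}\to 1$, the left-hand side is $(1+o(1))\log^{2}p_{r}/(2p_{r+1})$; from $G_{1}(x)\sim-\tfrac{1}{4}\log^{2}x$, the right-hand side is $(1+o(1))\log^{2}p_{r}/(4p_{r+1})$. The ratio tends to $2$, so the inequality holds for every sufficiently large $r$, which is what was claimed. The main technical point is keeping the error terms uniform through the reduction; no input deeper than Mertens' theorem and the prime number theorem is needed.
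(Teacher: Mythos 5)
Your argument is correct and is essentially the paper's argument: both reduce the claim, via the ratio identity of the preceding corollary, to comparing the increment of $G_1$ along consecutive primes with $|G_1(p_{r+1})|/p_{r+1}$, and both find these two quantities to be asymptotically $\frac{\log^2 p_{r+1}}{2p_{r+1}}$ and $\frac{\log^2 p_{r+1}}{4p_{r+1}}$. The one genuine difference is in the closed form of $G_1$, and here your computation is the correct one while the paper's contains a sign slip. You evaluate the $L_k$ directly ($L_2(p)=-\log^2 p$, $L_2(pq)=2\log p\log q$, $L_k(P_S)=0$ for $|S|>k$) and obtain
\begin{equation*}
G_1(x)=\gamma_1-\gamma T_1(x)+\tfrac12 T_2(x)-\tfrac12 T_1(x)^2+\tfrac12 U_2(x)=-\tfrac14\log^2x+O(\log x),
\end{equation*}
so $G_1$ is eventually negative and decreasing, with increment $G_1(p_r)-G_1(p_{r+1})=\frac{\log p_{r+1}}{p_{r+1}-1}\bigl(T_1(p_r)+\gamma-\tfrac12\log p_{r+1}\bigr)$. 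The paper instead routes the computation through $\Lambda_2$ and at one step uses $\Lambda_2(p)=-\log^2p$, whereas $\Lambda_2(p)=\sum_{d\mid p}\mu(d)\log^2(p/d)=+\log^2p$; this flips the sign of one $T_2$-type sum and produces $G_1(p_r)=+\tfrac14\log^2p_r+O(\log p_r)$ together with the increment $\frac{\log p_{r+1}}{p_{r+1}-1}\bigl(\tfrac32\log p_{r+1}-A(p_r)\bigr)$. Your sign is the one consistent with the paper's own Table 1: since $\gamma_1(\mathcal{P}_r)=\prod_{p\le p_r}(1-1/p)\,G_1(p_r)$ and the tabulated values of $\gamma_1(\mathcal{P}_r)$ are negative, $G_1(p_r)$ must be negative. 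Because the theorem concerns only absolute values and the competing leading constants $\tfrac12>\tfrac14$ come out the same either way, both arguments close successfully; yours is the internally consistent account. Two minor remarks: Mertens' theorem plus partial summation already gives $T_2(x)=\tfrac12\log^2x+O(\log x)$, and for $T_1(p_r)+\gamma-\tfrac12\log p_{r+1}=\tfrac12\log p_r+O(1)$ the crude bound $\log p_{r+1}=\log p_r+O(1)$ suffices, so the full prime number theorem is not actually needed.
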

\begin{proof}
We start with the expression
\begin{align*}
G_1(p_r)&=\sum\limits_{S \subseteq \mathcal{P}_{r}}\dfrac{L_{1}(P_{S})}{\prod\limits_{p|P_S}(p-1)}\gamma_{0} + \sum\limits_{S \subseteq \mathcal{P}_{r}}\dfrac{L_{0}(P_{S})}{\prod\limits_{p|P_S}(p-1)}\gamma_{1} -\dfrac{1}{2}\sum\limits_{S \subseteq \mathcal{P}_{r}}\dfrac{L_{2}(P_{S})}{\prod\limits_{p|P_S}(p-1)} \\
&=\gamma_{1}-\sum\limits_{S \subseteq \mathcal{P}_{r}}\dfrac{\Lambda_{1}(P_S)}{\prod\limits_{p|P_S}(p-1)}\gamma_{0}+\sum\limits_{S \subseteq \mathcal{P}_{r}}\dfrac{\Lambda_1(P_S)\log P_S}{\prod\limits_{p|P_S}(p-1)}-\dfrac{1}{2}\sum\limits_{S \subseteq \mathcal{P}_{r}}\dfrac{\Lambda_{2}(P_{S})}{\prod\limits_{p|P_S}(p-1)} \\
&=\gamma_{1}-\sum\limits_{p \in \mathcal{P}_{r}}\dfrac{\log p}{p-1}\gamma_{0}+\sum\limits_{p \in \mathcal{P}_{r}}\dfrac{\log^2 p}{p-1}-\dfrac{1}{2}\sum\limits_{S \subseteq \mathcal{P}_{r}}\dfrac{\Lambda_{2}(P_{S})}{\prod\limits_{p|P_S}(p-1)}.  \\
\end{align*}
The last sum can be further simplified as
\begin{align*}
\sum\limits_{S \subseteq \mathcal{P}_{r}}\dfrac{\Lambda_{2}(P_{S})}{\prod\limits_{p|P_S}(p-1)}&=\sum\limits_{p \in \mathcal{P}_{r}}\dfrac{\Lambda_{2}(p)}{p-1}+\sum\limits_{u=1}^{r-1}\sum\limits_{v=u+1}^{r}\dfrac{\Lambda_{2}(p_u p_v)}{(p_u -1)(p_v-1)} \\
&=\sum\limits_{p \in \mathcal{P}_{r}}\dfrac{\Lambda_{2}(p)}{p-1}+\sum\limits_{u=1}^{r-1}\sum\limits_{v=u+1}^{r}\dfrac{\sum\limits_{d|p_u p_v}\mu(d)\log^2\left(\frac{p_up_v}{d}\right)}{(p_u -1)(p_v-1)} \\
&=-\sum\limits_{p \in \mathcal{P}_{r}}\dfrac{\log^2 p}{p-1}+2\sum\limits_{u=1}^{r-1}\dfrac{\log p_u}{p_u - 1}\sum\limits_{v=u+1}^{r}\dfrac{\log p_v}{p_v - 1}.
\end{align*}
Now by using the definition of the function $A(x)$ \hyperlink{cite.DHF}{\cite[Equation 3.1]{DHF}}, we get
\begin{equation*}
G_1(p_{r+1})-G_1(p_{r})=\dfrac{\log p_{r+1}}{p_{r+1}-1}\left( \dfrac{3}{2}\log p_{r+1}-A(p_r) \right).
\end{equation*}
Now observe that
\begin{align*}
G_1(p_r)&=\gamma_{1}-\sum\limits_{p \in \mathcal{P}_{r}}\dfrac{\log p}{p-1}\gamma_{0}+\dfrac{3}{2}\sum\limits_{p \in \mathcal{P}_{r}}\dfrac{\log^2 p}{p-1}  -\sum\limits_{u=1}^{r-1}\dfrac{\log p_u}{p_u - 1}\left(A(p_{r})-A(p_{u}) \right) \\
&=\gamma_{1}-\sum\limits_{p \in \mathcal{P}_{r}}\dfrac{\log p}{p-1}\gamma_{0}+\dfrac{3}{2}\sum\limits_{p \in \mathcal{P}_{r}}\dfrac{\log^2 p}{p-1}  -A(p_{r})\sum\limits_{u=1}^{r-1}\dfrac{\log p_u}{p_u - 1}+ \sum\limits_{u=1}^{r-1}\dfrac{A(p_u)\log p_u}{p_u - 1} \\
&=\gamma_{1}+\gamma_{0}^{2}+\dfrac{3}{2}\sum\limits_{p \in \mathcal{P}_{r}}\dfrac{\log^2 p}{p-1}+A(p_r)\dfrac{\log p_{r}}{p_r-1}  -A(p_{r})^2+ \sum\limits_{u=1}^{r-1}\dfrac{A(p_u)\log p_u}{p_u - 1} \\
&=\gamma_{1}+\gamma_{0}^{2}+\dfrac{3}{2}\sum\limits_{p \in \mathcal{P}_{r}}\dfrac{\log^2 p}{p-1}+ \sum\limits_{u=1}^{r}\dfrac{A(p_u)\log p_u}{p_u - 1}-A(p_{r})^2. \\
\end{align*}
From the estimate 
\begin{equation*}
\sum\limits_{n \leq x}\dfrac{\Lambda(n)}{n}=\log x + O(1)
\end{equation*}
and equation 3.7 in \cite{DHF} it follows that 
$$A(x)=\log x + O(1).$$ 
Therefore by the use of partial summation we get
\begin{align*}
\sum\limits_{p \leq x}\dfrac{\log^2 p}{p-1}&=(A(x)-\gamma_{0})\log x - \int\limits_{1}^{x}\dfrac{A(t)-\gamma_{0}}{t}dt\\
&=A(x)\log x - \gamma_{0}\log x- \int\limits_{1}^{x}\dfrac{\log t}{t}dt + O\left(\int\limits_{1}^{x}\dfrac{1}{t}dt \right)\\
&=\dfrac{\log^2 x}{2}+O(\log x).
\end{align*}
Similarly one can deduce that
 \begin{equation*}
\sum\limits_{u=1}^{r}\dfrac{A(p_u)\log p_u}{p_u - 1}=\sum\limits_{u=1}^{r}\dfrac{\log^2 p_u}{p_u - 1}+O(\log p_r)$$ and $$A(p_{r})^2=\log^2 p_r + O(\log p_r).
 \end{equation*}
 Using the above estimates we get 
 $$\dfrac{G_1(p_r)}{p_{r+1}}=\dfrac{1}{p_{r+1}}\left(\dfrac{\log^2 p_{r}}{4}+O(\log p_r)\right)$$
 and for $r$ increasing arbitrarily large we have

\begin{align*}
G_1(p_{r+1})-G_1(p_{r})&=\dfrac{\log p_{r+1}}{p_{r+1}-1}\left( \dfrac{3}{2}\log p_{r+1}-A(p_r) \right) \\
&=\dfrac{1}{p_{r+1}-1}\left( \dfrac{3}{2}\log^2 p_{r+1}-\log p_r \log p_{r+1} +O(\log p_{r+1}) \right) \\
&> \dfrac{1}{p_{r+1}}\left(\dfrac{1}{2}\log^2 p_{r+1} + O(\log p_{r+1})\right)\\
& > \dfrac{G_1(p_{r+1})}{p_{r+1}}.
\end{align*}
 Hence we get the desired inequality.   
\end{proof}
Values of $\gamma_k(\mathcal{P}_r)$ for a few initial values of $r$ and $k$ are given in \hyperref[TBL]{Table 1}.

\subsection{A closed form expression for the first and second generalized Stieltjes constant}
We conclude this section with an another important generalization of Euler constant $\gamma$ arising from the study of Laurent series expansion of Hurwitz zeta function $\zeta(s,x)$ around the point $s=1.$ In particular the expansion is given by 
\begin{equation} \label{HWZE}
\zeta(s,a)=\dfrac{1}{s-1}+\sum\limits_{n=0}^{\infty}\dfrac{(-1)^{n}\gamma_{n}(a)}{n!}(s-1)^{n}
\end{equation}
where the constants $\gamma_n(a)$ (known as Generalized Stieltjes constants) was shown by Berndt \hyperlink{cite.BE}{\cite[Theorem 1]{BE}} to have the following asymptotic representation
\begin{equation*}
\gamma_n(a)=\lim\limits_{N\rightarrow \infty}\left( \sum\limits_{k=0}^{N}\dfrac{\log^n (k+a)}{k+a}-\dfrac{\log^{n+1}(N+a)}{n+1} \right).
\end{equation*} 
Here we can see that when $a=1$, \eqref{HWZE} reduces to \eqref{LSC}. For a good account of history and survey of results related to the constants $\gamma_n(a)$ see \cite{BLI}. As mentioned by Blagouchine that these constants are much less studied than the Euler Stieltjes constants $\gamma_n.$ It was conjectured in 2015 by Blagouchine \hyperlink{cite.BLI2}{\cite[p. 103]{BLI2}} that  ``any generalized Stieltjes constant of the form $\gamma_1(r/q)$, where $r$ and $q$ are positive integers such that $r<q$, may be expressed by means of the Euler's constant $\gamma$, the first Stieltjes constant $\gamma_1$, the logarithm of the $\Gamma$ function at rational argument(s) and some relatively simple, perhaps elementary, function." Using a large number of calculations involving Malmsten's integrals, this conjecture was settled by Blagouchine himself in 2015 by proving \hyperlink{cite.BLI}{\cite[Theorem 1]{BLI}} four equivalent closed form expressions \hyperlink{cite.BLI}{\cite[Eq. 37,50,53,55]{BLI}} for the first generalized Stieltjes constant $\gamma_1(r/q).$ In particular equation 50 in \cite{BLI} is the following identity.
\begin{equation} \label{FGSE}
\begin{split}
\gamma_1\left(\dfrac{r}{q}\right)&=\gamma_1-\gamma \log 2q - \dfrac{\pi}{2}\left(\gamma+\log 2\pi q\right)\cot\left(\dfrac{\pi r}{q}\right)+\sum\limits_{l=1}^{q-1}\cos \dfrac{2\pi r l}{q}\cdot\zeta^{''}\left(0,\dfrac{l}{q}\right) \\
& \quad + \pi\sum\limits_{l=1}^{q-1}\sin\dfrac{2 \pi r l}{q}\cdot\log \Gamma\left(\dfrac{l}{q}\right) + (\gamma+ \log 2\pi q)\sum\limits_{l=1}^{q-1}\cos \dfrac{2\pi r l}{q}\cdot \log \sin \left(\dfrac{\pi l}{q}\right) \\
& \quad - \log^2 2 - \log 2\cdot \log \pi q - \dfrac{1}{2} \log^2 q. 
\end{split}
\end{equation}
The identity \eqref{FGSE} was recently proved by Coffey \hyperlink{cite.COFFM}{\cite[Proposition 1]{COFFM}} using functional equation of Hurwitz zeta function. We give here a much simpler proof with the help of an identity of Shirasaka. \\
\subsubsection*{Proof of \eqref{FGSE}}
Using Laurent series expansion \eqref{LSH} of $\zeta(s;r,q)$ Shirasaka proved that 
\hyperlink{cite.SHI}{\cite[Theorem (ii)]{SHI}} the constants $\gamma_k(r,q)$ are related with generalized Stieltjes constants by the following identity:
\begin{equation} \label{FLTS}
\gamma_k(r,q)=\dfrac{1}{q}\sum\limits_{j=0}^{k}\dbinom{k}{j}(\log q)^{k-j}\gamma_j\left(\dfrac{r}{q}\right)-\dfrac{\log^{k+1} q}{q(k+1)}.
\end{equation}
By the use of the work of Deninger \cite{DCH}, Kanemitsu  gave the closed form expressions for $\gamma_0(r,q)$ \hyperlink{cite.KAN}{\cite[Eq. 3.7]{KAN}}, $\gamma_1(r,q)$ \hyperlink{cite.KAN}{\cite[Eq. 3.10]{KAN}} and $\gamma_2(r,q)$ \hyperlink{cite.KAN}{\cite[Eq. 3.11]{KAN}} in terms of Deninger function $R_k(x)$ which satisfies \hyperlink{cite.KAN}{\cite[Eq. 2.7]{KAN}} 
\begin{equation*}
R_k(x)=(-1)^{k+1}\dfrac{\partial^k}{\partial s^k}\zeta(0,x).
\end{equation*}  
These expressions for $\gamma_0(r,q),\gamma_1(r,q),\gamma_2(r,q)$ and the identity \eqref{FLTS} readily gives the desired expression for $\gamma_j\left(\dfrac{r}{q}\right)$ with $j=0,1,2.$ 
   
\section{Concluding remarks} \label{CONS}
Just like, $\tilde{\Delta}_2(x)$ a more general $\tilde{\Delta}_k(x)$ is defined as the summatory function of all those natural number $n$ which can be written as the product of $k$ many factors with each factor of the form $n_i\equiv r_i \bmod q_i$ for $i=1,2,...,k$. Any interested reader can derive results for $\tilde{\Delta_k}(x)$ parallel to \hyperref[ETT]{Theorem \ref{ETT}} using the results of \cite{IKA} and \cite{SIT}. However the expressions become more complicated and tedious to handle. On the other hand compared to the case of $\gamma(r,q)$ the arithmetic nature of $\zeta_k(\alpha,r,q)$ seems to be even harder to investigate because Baker's theory on linear forms of logarithm of algebraic numbers seems to be of no help. Nevertheless we hope to see some progress in the understanding of analytic and arithmetic nature of these constants in future.    
\section{Acknowledgements}
The authors would like to thank the referee for some highly useful suggestions which improved the quality of this paper. The first author thanks NBHM for providing partial support for this work. The second author thanks CSIR for financial support. 
\newpage
\begin{landscape}
\begin{table} \label{TBL}
\caption{Values for some $\gamma_k(\mathcal{P}_r)$}
\begin{tabular}{l|llllllll}
$r \setminus k$ & $0$ & $1$ & $2$ & $3$ & $4$ & $5$ & $6$ & $7$ \\
\hline
$0$ & $0.577216$ & $-0.0728158$ & $-0.00969036$ & $0.00205383$ & $0.00232537$ & $0.000793324$ & $-0.000238769$ & $-0.000527290$ \\
$1$ & $0.635181$ & $-0.116342$ & $-0.0627675$ & $-0.0453815$ & $-0.0357170$ & $-0.0278208$ & $-0.0208683$ & $-0.0150387$ \\
$2$ & $0.606556$ & $-0.209589$ & $-0.151847$ & $-0.0645267$ & $0.127215$ & $0.524643$ & $1.30859$ & $2.81260$ \\
$3$ & $0.592541$ & $-0.276571$ & $-0.215951$ & $0.00931819$ & $0.669435$ & $2.12763$ & $4.17280$ & $1.97223$ \\
$4$ & $0.582022$ & $-0.329654$ & $-0.263695$ & $0.136174$ & $1.41389$ & $3.95177$ & $3.70916$ & $-31.0884$ \\
$5$ & $0.578937$ & $-0.366822$ & $-0.308362$ & $0.244198$ & $2.21012$ & $6.06838$ & $1.19016$ & $-95.6539$ \\
$6$ & $0.575402$ & $-0.400252$ & $-0.345953$ & $0.376805$ & $3.10193$ & $8.11685$ & $-5.99605$ & $-201.331$ \\
$7$ & $0.573521$ & $-0.427320$ & $-0.380680$ & $0.498397$ & $4.00756$ & $10.2843$ & $-15.4953$ & $-339.655$ \\
$8$ & $0.571312$ & $-0.452521$ & $-0.411088$ & $0.634850$ & $4.96629$ & $12.2857$ & $-29.6893$ & $-517.649$ \\
$9$ & $0.569788$ & $-0.474179$ & $-0.439333$ & $0.763224$ & $5.92289$ & $14.3212$ & $-46.0836$ & $-725.432$ \\
$10$ & $0.569135$ & $-0.492008$ & $-0.466438$ & $0.871617$ & $6.84283$ & $16.5601$ & $-62.3404$ & $-953.461$ \\
$11$ & $0.568272$ & $-0.509140$ & $-0.491384$ & $0.988428$ & $7.79444$ & $18.6716$ & $-82.3452$ & $-1215.83$ \\
\end{tabular}
\end{table}
\end{landscape}
\bibliographystyle{amsplain}
\providecommand{\bysame}{\leavevmode\hbox to3em{\hrulefill}\thinspace}
\providecommand{\MR}{\relax\ifhmode\unskip\space\fi MR }
\providecommand{\MRhref}[2]{%
  \href{http://www.ams.org/mathscinet-getitem?mr=#1}{#2}
}
\providecommand{\href}[2]{#2}

\end{document}